\newtheorem{theorem}{Theorem}[section]
\newtheorem{lemma}[theorem]{Lemma}%[section]
\newtheorem{conjecture}{Conjecture}%[section]
\title{Cyclic Base Ordering of Graphs\thanks{This is a summer research project (2021) of three high school students Jessica Li, Eric Yang and William Zhang under the supervision of Dr. Xiaofeng Gu of University of West Georgia.}
}
\author{Jessica Li$^1$, Eric Yang$^2$, William Zhang$^3$
\thanks{The authors are listed in alphabetical order by last name.}
\\
\\
\small $^1$Westwood High School\\\small Round Rock, Texas 78750\\\small Email: jessicaalbertaone.li@gmail.com
\\
\small $^2$Allen High School\\\small Allen, Texas 75002\\\small Email: eyangch@gmail.com
\\
\small $^3$Harker High School\\ \small San Jose, California 95129\\ \small Email: wzyang24@gmail.com
}
\begin{document}
\date{}
\maketitle

\begin{abstract}
\normalsize
A cyclic base ordering of a connected graph $G$, is a cyclic ordering of $E(G)$ such that every cyclically consecutive $|V(G)|-1$ edges form a spanning tree. In this project, we study cyclic base ordering of various families of graphs, including square of cycles, wheel graphs, generalized wheel graphs and broken wheel graphs, fan and broken fan graphs, prism graphs, and maximal 2-degenerate graphs. We also provide a polynomial time algorithm to verify any giving edge ordering is a cyclic base ordering.
\end{abstract}

%{\noindent {\bf MSC:} }
{\noindent {\bf Key words:} cyclic base ordering, spanning tree, square graph, wheel graph, fan graph, prism graph, maximal 2-degenerate}

\newpage
\tableofcontents
\newpage
\section{Introduction}
Let $G$ be a connected graph on $n$ vertices with vertex set $V(G)$ and edge set $E(G)$. A {\bf cyclic base ordering} or shortly {\bf CBO} of $G$, is a cyclic ordering of $E(G)$ such that every cyclically consecutive $n-1$ edges form a spanning tree. Equivalently, a cyclic base ordering is a bijection $\mathcal{O}: E(G) \longrightarrow \{1,2,\ldots,|E(G)|\}$ such that $\{\mathcal{O}^{-1}(k): k= i, i+1,\ldots, i+n-2\}$ forms a spanning tree of $G$, where the labelling $k$ is equivalent modulo $|E(G)|$. Here is a simple fact. If a graph has a CBO, then the CBO can start with any edge, since it is cyclically ordered.

By definition, it is easy to see that any path, cycle, and tree has a CBO.
The well-known Petersen graph has a CBO, as shown in Figure~\ref{fig:petersen}.
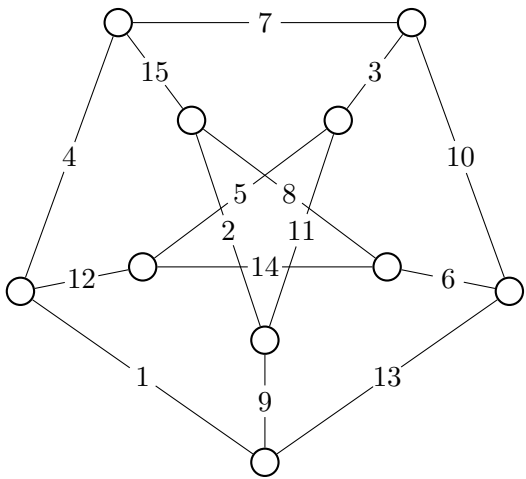
\begin{figure}[htb]
\begin{center}
\begin{tikzpicture}[every node/.style={circle,thick,draw}, scale=0.65] 
\node (1) at (5, 0) {};
\node (2) at (0, 3.5) {};
\node (3) at (2, 9) {};
\node (4) at (8, 9) {};
\node (5) at (10, 3.5) {};
\node (6) at (5, 2.5) {};
\node (7) at (2.5, 4) {};
\node (8) at (3.5, 7) {};
\node (9) at (6.5, 7) {};
\node (10) at (7.5, 4) {};
\begin{scope}[>={},every node/.style={fill=white,circle,inner sep=0pt,minimum size=12pt}]
\path [] (1) edge node {1} (2);
\path [] (8) edge node {2} (6);
\path [] (4) edge node {3} (9);
\path [] (2) edge node {4} (3);
\path [] (7) edge node {5} (9);
\path [] (5) edge node {6} (10);
\path [] (3) edge node {7} (4);
\path [] (8) edge node {8} (10);
\path [] (1) edge node {9} (6);
\path [] (4) edge node {10} (5);
\path [] (6) edge node {11} (9);
\path [] (2) edge node {12} (7);
\path [] (1) edge node {13} (5);
\path [] (7) edge node {14} (10);
\path [] (3) edge node {15} (8);
\end{scope}
\end{tikzpicture}
\end{center}
\caption{CBO of the Petersen graph}
\label{fig:petersen}
\end{figure}

The {\bf density} of $G$, denoted by $d(G)$, is defined to be $$d(G)=\frac{|E(G)|}{|V(G)|-1}.$$
A graph $G$ is {\bf uniformly dense} if $d(H)\leq d(G)$ for every subgraph $H$ of $G$.
\begin{conjecture}[Kajitani, Ueno and Miyano~\cite{KaUM88}]
A connected graph $G$ has a cyclic base ordering if and only if $G$ is uniformly dense.
\end{conjecture}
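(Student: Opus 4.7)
The plan is to prove the two implications separately. For necessity (``has a CBO $\Rightarrow$ uniformly dense''), I would give a short double-counting argument. Fix a subgraph $H \subseteq G$ and let $n = |V(G)|$. Each of the $|E(G)|$ cyclic windows of $n-1$ consecutive edges is a spanning tree $T_i$ of $G$, so its restriction to $V(H)$ is a forest and contains at most $|V(H)|-1$ edges of $H$. Summing $|E(T_i) \cap E(H)|$ over all windows double-counts each edge of $H$ exactly $n-1$ times (since every edge lies in exactly $n-1$ cyclic windows), giving $(n-1)\,|E(H)| \le |E(G)|\,(|V(H)|-1)$, which rearranges to $d(H) \le d(G)$.

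For the sufficiency direction, I would first reduce to the case where $d(G) = k \in \mathbb{Z}$. There, Nash--Williams' tree-packing theorem decomposes $E(G)$ into $k$ edge-disjoint spanning trees $T^{(1)}, \ldots, T^{(k)}$, and a CBO amounts to interleaving these $k$ trees into one cyclic order so that every sliding window of $n-1$ edges is again a spanning tree. The candidate construction is a round-robin pattern $e^{(1)}_1, e^{(2)}_1, \ldots, e^{(k)}_1, e^{(1)}_2, \ldots$, with the internal order inside each tree chosen so that consecutive windows differ by a single symmetric base exchange in the graphic matroid (passing from $T_i$ to $T_{i+1}$ by removing one edge and adding one edge). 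Extending to non-integer density $d(G) = p/q$ would replace the integer decomposition with a fractional tree-packing coming from a point of the spanning-tree polytope, and one would try to realize the rational convex combination by a periodic cyclic schedule of length $|E(G)|$.

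The sufficiency direction is the entire substance of the conjecture and is still open, so I expect this to be where any honest attempt gets stuck. The fundamental obstruction is that Nash--Williams' theorem produces a decomposition with no control over the cyclic ordering of the trees, and the required \emph{local} property (every window is a tree) is much more restrictive than the \emph{global} one (existence of a packing). A natural inductive attack that deletes an edge $e$ and applies a CBO of $G - e$ stumbles because uniform density is extremely fragile: removing almost any edge from the densest subgraph destroys uniformity, and contraction-based induction faces the dual problem of creating multi-edges that violate the density bound elsewhere. Matroid base-exchange graphs and Brualdi-type serial exchange lemmas give the right local moves, but forcing such a sequence of exchanges to close up after exactly $|E(G)|$ steps is precisely what the conjecture encodes. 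For this reason I would most likely have to retreat to structured subclasses --- exactly the strategy pursued in the rest of the paper --- where the required interleaving can be written down by hand or verified algorithmically.
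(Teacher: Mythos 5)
The statement you were asked to prove is an open conjecture: the paper itself contains no proof of it, noting only that the necessity was established in the cited work of Kajitani, Ueno and Miyano and that the sufficiency ``is still unsolved''; the remainder of the paper verifies the conjecture on special families. Your necessity argument is correct and is essentially the standard one. Each of the $|E(G)|$ cyclic windows of length $n-1$ is a spanning tree $T_i$, its restriction to $E(H)$ is a forest whose edges all lie inside $V(H)$ and hence number at most $|V(H)|-1$, and each edge of $H$ lies in exactly $n-1$ of the windows (which requires $|E(G)|\ge n-1$, automatic for connected $G$, and one should restrict to subgraphs $H$ with $|V(H)|\ge 2$ so that $d(H)$ is defined). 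Summing gives $(n-1)\,|E(H)|\le |E(G)|\,(|V(H)|-1)$, i.e.\ $d(H)\le d(G)$. This is a complete and correct proof of the ``only if'' direction.

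For sufficiency you have, as you yourself acknowledge, not produced a proof, and none should be expected: this direction is precisely the open content of the conjecture. The round-robin interleaving of a Nash--Williams decomposition is not a proof strategy but a heuristic --- nothing guarantees that the edges within each tree can be ordered so that every window remains acyclic, and the fractional extension to non-integer density is even further from being rigorous. Your diagnosis of the obstruction (tree-packing theorems give existence of a decomposition with no control over cyclic order, while the CBO condition is a much stronger local constraint; edge-deletion and contraction inductions destroy uniform density) is accurate and explains why the natural attacks fail. The honest conclusion, and the one the paper itself adopts, is to retreat to structured subclasses --- squares of cycles, wheels and their variants, fans, prisms, maximal $2$-degenerate graphs --- where an explicit cyclic ordering can be written down and verified. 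So: your necessity proof stands; your sufficiency discussion is a reasonable survey of why the problem is hard, but it is not, and cannot currently be, a proof.
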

The necessity was confirmed in \cite{KaUM88}, however, the sufficiency is still unsolved. To support this conjecture, the following results have been proved in \cite{KaUM88} and \cite{GuHL14}.
\begin{theorem}[\hspace{1sp}\cite{KaUM88,GuHL14}]
\label{prev:CBO}
The following graphs have cyclic base ordering.
\begin{itemize}
    \item Any complete graph $K_n$.
    \item Any complete bipartite graph.
    \item Any $k$-tree for $k\in\{2,3\}$ (A graph $G$ is a {\bf $k$-tree} if $G=K_{k+1}$ or $G$ has a vertex $v$ such that $G-v$ is a $k$-tree and such that $v$ is adjacent to all vertices in a clique of order $k$).
    \item Any (multi)graph consisting of exactly 2 edge-disjoint spanning trees.
\end{itemize}
\end{theorem}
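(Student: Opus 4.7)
The plan is to verify each of the four items separately by producing an explicit cyclic ordering. Since \cite{KaUM88} already established that uniform density is necessary, there is no need to check density for these specific families; the task is purely constructive. In every case the strategy exploits abundant symmetry of the family: for $K_n$ and $K_{m,n}$ a cyclic action on vertices induces a natural rotation on edges; for $k$-trees there is a canonical recursive construction by simplicial vertices; and for a graph composed of exactly two edge-disjoint spanning trees the matroid basis-exchange structure is available.

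For $K_n$ I would decompose $E(K_n)$ into ``difference classes,'' where edge $\{i,j\}$ belongs to class $(i+j) \bmod n$. When $n$ is odd these classes are Hamiltonian cycles; when $n$ is even they split into near-Hamiltonian structures together with a perfect matching. Listing the classes in a fixed order and concatenating edges within each class in a carefully chosen pattern produces a cyclic ordering; verifying the sliding-window property then reduces to checking, via rotational invariance, that no vertex is omitted from any window of $n-1$ consecutive edges. For $K_{m,n}$ I would use the parallel construction with edges indexed by their difference modulo $\max(m,n)$, exploiting the biregular structure to track which vertices appear in each window.

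For $k$-trees with $k\in\{2,3\}$ I would induct on $|V(G)|$. The base case $G=K_{k+1}$ is handled by the previous paragraph. For the inductive step, let $v$ be a simplicial vertex of $G$ adjacent to a $k$-clique $K$; then $G-v$ is a $k$-tree and has a CBO $\mathcal{O}'$ by induction. The $k$ new edges incident to $v$ must be inserted into $\mathcal{O}'$ at positions synchronized with the appearances of the edges of $K$, so that in every window the edges of $K$ present together with the edges at $v$ present span $K\cup\{v\}$. A finite case analysis then finishes $k=2$ and $k=3$, with the larger value of $k$ requiring more bookkeeping.

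For the last item, let $G=T_1\cup T_2$ be an edge-disjoint union of two spanning trees, so $|E(G)|=2(n-1)$. I would construct the CBO by interleaving $T_1$ and $T_2$ through a basis-exchange argument: when the sliding window advances by one step an edge $e$ leaves and an edge $f$ enters, and $f$ must be chosen so that the resulting window is again a spanning tree. The matroid basis-exchange axiom supplies such an $f$ at every local step. The main obstacle I anticipate is cyclic closure: the greedy construction easily produces a valid linear ordering, but showing that the last $n-1$ edges wrap around correctly to the first edge requires a global argument, most plausibly via the Nash--Williams--Tutte theorem together with the hypothesis that $G$ decomposes into exactly two spanning trees. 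A directly analogous closure issue is the chief delicacy in the $k$-tree induction, where one must thread the newly inserted edges consistently around the whole cycle rather than only locally around the edges of~$K$.
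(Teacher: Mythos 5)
This theorem is quoted from \cite{KaUM88} and \cite{GuHL14}; the paper itself gives no proof of it, so there is no in-text argument to compare yours against. Judged on its own, your proposal is a collection of strategies rather than a proof, and each item has a concrete gap. For $K_n$, the classes of edges $\{i,j\}$ with $i+j\equiv c\pmod n$ are (near-)perfect matchings, not Hamiltonian cycles: each vertex $i$ is paired with the unique vertex $c-i$. Even the genuine difference classes $j-i\equiv d\pmod n$ form a Hamiltonian cycle only when $\gcd(d,n)=1$ (for $n=9$, $d=3$ they form three disjoint triangles). So the decomposition you intend to concatenate does not have the structure you attribute to it, and the ``carefully chosen pattern'' within and between classes is precisely the content that is missing; the same criticism applies to your sketch for $K_{m,n}$.

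For the $k$-tree induction the skeleton is reasonable --- indeed the paper uses exactly this template in its own proofs for maximal $2$-degenerate graphs and for broken fan graphs: take a CBO of $G-v$, locate an edge $p$ of smallest label on the path joining the neighbours of $v$ inside the spanning tree formed by the first window, and splice the new edges at $v$ into positions spaced roughly $|E|/ (k+1)$ apart so that every window contains the correct number of them, with one exceptional window containing all of them that must be checked by hand. But you defer all of this to ``a finite case analysis,'' which it is not: the verification must be carried out for every $n$, and choosing the insertion positions is the entire proof. For the union of two edge-disjoint spanning trees you explicitly concede that cyclic closure is unresolved; since a valid \emph{linear} ordering via basis exchange is easy and is not what the theorem asserts, the unresolved step is the whole theorem. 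None of the four items is carried to completion, so the proposal cannot substitute for the constructions in \cite{KaUM88} and \cite{GuHL14}.
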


We prove that the conjecture is true for several families of uniformly dense graphs in the next several sections, including square of cycles, wheel graphs, generalized wheel graphs and broken wheel graphs, fan and broken fan graphs, prism graphs, as well as maximal 2-degenerate graphs. We also provide a polynomial time algorithm to verify any giving edge ordering is a cyclic base ordering.

Given an edge ordering of a graph $G$, every cyclically consecutive $|V(G)|-1$ edges is called a {\bf progression}. To verify an edge ordering is a CBO, it suffices to show that any progression of this edge ordering induces a spanning tree.

\section{The square of cycles}
The {\bf square of a cycle $C_n$}, denoted by $C_n^2$, is a graph obtained by joining every pair of vertices of distance two in $C_n$.

\begin{theorem}
Every graph $C_n^2$ has a cyclic base ordering.
\end{theorem}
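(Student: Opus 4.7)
The plan is to prove this by giving, for each $n\ge 3$, an explicit cyclic ordering of the $2n$ edges of $C_n^2$ and then verifying that every window of $n-1$ consecutive edges induces a spanning tree. For the small cases $n\in\{3,4,5\}$ the graph $C_n^2$ is just $K_n$ (each vertex has degree $n-1$), so a CBO is already guaranteed by Theorem~\ref{prev:CBO}. The real work is therefore for $n\ge 6$, where $|V(C_n^2)|=n$ and $|E(C_n^2)|=2n$, so a progression has exactly $n-1$ edges and misses $n+1$ edges.

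Label the vertices $v_0,\dots,v_{n-1}$ cyclically and write $e_i=v_iv_{i+1}$ (\emph{rim} edges) and $f_i=v_iv_{i+2}$ (\emph{chord} edges), indices mod $n$. The naive interleaving $e_0,f_0,e_1,f_1,\dots,e_{n-1},f_{n-1}$ fails immediately because the four consecutive edges $e_i,f_i,e_{i+1},f_{i+1}$ contain the triangle $v_iv_{i+1}v_{i+2}$. I would therefore propose a shifted interleaving — roughly, place $f_{i+k}$ next to $e_i$ for a suitable offset $k$, chosen so that within any window of $n-1$ edges no triangle $\{e_i,e_{i+1},f_i\}$, no $4$-cycle $\{e_i,f_{i+1},e_{i+2},f_i\}$, and no ``long'' cycle (the rim $C_n$ when $n$ is odd, or the chord cycles $f_0,f_2,\dots$ and $f_1,f_3,\dots$ when $n$ is even) is entirely contained. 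Because the chord subgraph behaves differently for odd $n$ (a single Hamilton cycle) and even $n$ (two disjoint $C_{n/2}$'s), I would split the construction into two parity cases and pick the offset in each accordingly.

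To verify the proposed ordering is a CBO I would exploit the sliding-window structure. First I would check by direct inspection that the initial progression $P_0$ is a spanning tree. Then, for each $i$, the progression $P_{i+1}$ differs from $P_i$ by removing the edge at position $i$ and inserting the edge at position $i+n-1$; by the matroid exchange property, $P_{i+1}$ is a spanning tree iff the incoming edge $g$ creates a unique cycle in $P_i$ and the outgoing edge $g'$ lies on that cycle. This reduces each step to identifying one fundamental cycle, which for the proposed ordering is always a short cycle of $C_n^2$ (a triangle or a $4$-cycle) whose structure is explicit. Rotational symmetry of both $C_n^2$ and the ordering would allow me to reduce the $2n$ swap checks to a small constant number of representative cases.

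The main obstacle is pinning down the correct ordering: with $n+1$ edges missing from each progression, there is enough room in principle to hit every short cycle, but many constraints have to be simultaneously satisfied, and small perturbations of the ordering tend to create a triangle or a $4$-cycle somewhere. Once the right offset is chosen, the per-step verification is routine via the exchange argument; the creative step is designing the interleaving, and handling the parity of $n$ cleanly.
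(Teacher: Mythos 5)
There is a genuine gap: you never actually produce the cyclic ordering, and for this theorem the ordering \emph{is} the proof. Your proposal correctly sets up the bookkeeping ($2n$ edges, windows of $n-1$ edges missing $n+1$ edges, the need to avoid triangles $\{e_i,e_{i+1},f_i\}$, $4$-cycles, and the long rim/chord cycles), and it correctly identifies that the chord subgraph is a Hamilton cycle for odd $n$ and two disjoint $(n/2)$-cycles for even $n$. But the construction is left as ``place $f_{i+k}$ next to $e_i$ for a suitable offset $k$,'' and you concede that ``the main obstacle is pinning down the correct ordering.'' Nothing in the proposal establishes that a suitable offset exists, and the subsequent claims that the initial window is a spanning tree and that every fundamental cycle arising in the sliding-window exchange is a triangle or $4$-cycle are assertions about an ordering that has not been written down. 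As it stands this is a proof strategy, not a proof.

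Two further points of comparison with the paper's argument. First, the paper's odd-case ordering is \emph{not} a shifted interleaving at all: it lists all $n$ chords first and then the rim edges in a specific transposed order $b_1,b_3,b_2,b_5,b_4,\ldots$; only the even case uses an interleaving (with offset $k=n/2$). So the single template you propose may not cover both parities, and at minimum the odd case needs a different idea. Second, your verification plan via the matroid exchange property (check one window, then show each incoming edge's fundamental cycle contains the outgoing edge) is sound and is a legitimate alternative to the paper's method, which instead describes directly, for each residue class of starting position, how the $a$-edges and $b$-edges of that window decompose into two or three paths meeting pairwise in at most one vertex. The exchange route would likely be shorter once an ordering is fixed, but fixing the ordering is precisely the step that is missing. (Your handling of $n\in\{3,4,5\}$ via $K_n$ and Theorem~\ref{prev:CBO} is fine.)
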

\begin{proof}
Suppose that the vertices of $C_n^2$ are $v_1,v_2,\ldots, v_n$ in the order on the cycle $C_n$. For each $i=1,2,\ldots, n$, let $a_i$ be the edge joining $v_i$ and $v_{i+2}$, and $b_i$ be the edge joining $v_i$ and $v_{i+1}$, where the subscripts are equivalent modulo $n$.

For $n=2k+1$, we define $\mathcal{O}_{2k+1} = (a_1, a_2, \ldots, a_{2k}, a_{2k+1}, b_1, b_3, b_2, b_5, b_4, \ldots, b_{2k+1}, b_{2k})$. An example is shown in Figure~\ref{fig:oddsq} when $n=7$. We can show $\mathcal{O}_{2k+1}$ is a cyclic base ordering of $C_{2k+1}^2$.
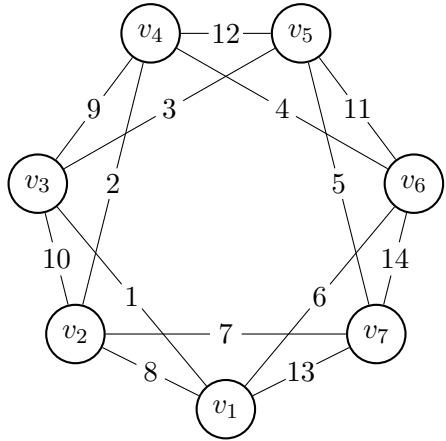
\begin{figure}[htb]
\centering
\begin{tikzpicture}[every node/.style={circle,thick,draw}] 
\node (1) at (2.5, 0) {$v_1$};
\node (2) at (0.5, 1) {$v_2$};
\node (3) at (0, 3) {$v_3$};
\node (4) at (1.5, 5) {$v_4$};
\node (5) at (3.5, 5) {$v_5$};
\node (6) at (5, 3) {$v_6$};
\node (7) at (4.5, 1) {$v_7$};
\begin{scope}[>={},every node/.style={fill=white,circle,inner sep=0pt,minimum size=12pt}]
\path [] (1) edge node {1} (3);
\path [] (2) edge node {2} (4);
\path [] (3) edge node {3} (5);
\path [] (4) edge node {4} (6);
\path [] (5) edge node {5} (7);
\path [] (6) edge node {6} (1);
\path [] (7) edge node {7} (2);
\path [] (1) edge node {8} (2);
\path [] (3) edge node {9} (4);
\path [] (2) edge node {10} (3);
\path [] (5) edge node {11} (6);
\path [] (4) edge node {12} (5);
\path [] (7) edge node {13} (1);
\path [] (6) edge node {14} (7);
\end{scope}
\end{tikzpicture}
\caption{CBO of $C_7^2$}
\label{fig:oddsq}
\end{figure}

For an odd $t\ge 5$, any progression starting with $a_t$ is in the form of 
$$a_t,\ldots, a_{2k}, a_{2k+1}, b_1,b_3,\cdots,b_{t-4}, b_{t-5}, b_{t-2}, b_{t-3}.$$
All edges $a_i$ in this progression form two paths $v_t v_{t+2}\ldots v_{2k+1} v_2$ and $v_{t+1} v_{t+3}\ldots v_{2k} v_1$, while all edges $b_i$ form a path $v_1 v_2\ldots v_{t-1}$. Clearly, $v_1$ and $v_2$ are the only common vertices of the three paths and all vertices are connected through the three paths. Thus it is a spanning tree.

For an odd $t\ge 5$, any progression starting with $b_t$ is in the form of 
$$b_t,b_{t-1}, b_{t+2}, b_{t+1},\ldots,b_{2k+1},b_{2k},a_1,\ldots,a_{t-3}.$$
All edges $a_i$ in this progression form two paths $v_1 v_3\ldots v_{t-2}$ and $v_2 v_4\ldots v_{t-1}$, while all edges $b_i$ form a path $v_{t-1} v_t\ldots v_{2k+1} v_1$. Clearly, the three paths form a longer spanning path, and thus is a spanning tree.

For an even $t\ge 4$, any progression starting with $a_t$ is in the form of 
$$a_t,\ldots, a_{2k}, a_{2k+1}, b_1,b_3,\cdots,b_{t-3}, b_{t-4}, b_{t-1}.$$
All edges $a_i$ in this progression form two paths $v_t v_{t+2}\ldots v_{2k} v_1$ and $v_{t+1} v_{t+3}\ldots v_{2k+1} v_2$, while all edges $b_i$ form two paths $v_1 v_2\ldots v_{t-2}$ and $v_{t-1} v_t$. The two paths formed by edges $b_i$ are connected by the path $v_t v_{t+2} \ldots v_{2k} v_1$, and the final path $v_{t+1} v_{t+3} \ldots v_{2k+1} v_2$ only shares one vertex with any of the other 3 paths, so the progression induces a spanning tree.

For an even $t\ge 4$, any progression starting with $b_t$ is in the form of 
$$b_t,b_{t+3}, b_{t+2}, b_{t+5},\ldots, b_{2k+1},b_{2k},a_1,\ldots,a_{t-1}.$$
All edges $a_i$ in this progression form two paths $v_1 v_3\ldots v_{t-1} v_{t+1}$ and $v_2 v_4\ldots v_{t-2} v_t$, while all edges $b_i$ form two paths $v_t v_{t+1}$ and $v_{t+2} v_{t+3}\ldots v_{2k+1} v_1$. Since the four paths form a longer spanning path, the progression induces a spanning tree.

Thus $\mathcal{O}_{2k+1}$ is a cyclic base ordering of $C_{2k+1}^2$.

\medskip
For $n=2k$, define $\mathcal{O}_{2k} = (a_1, b_{k+1}, a_2, b_{k+2}, \ldots, a_k, b_{2k}, a_{k+1}, b_1, \ldots, a_{2k}, b_{k})$. An example is shown in Figure~\ref{fig:evensq} when $n=6$. We can show $O_{2k}$ is a cyclic base ordering of $C_{2k}^2$.
\begin{figure}[htb]
\centering{
\begin{tikzpicture}[every node/.style={circle,thick,draw}] 
\node (1) at (3.75, 0.5) {$v_1$};
\node (2) at (1.25, 0.5) {$v_2$};
\node (3) at (0, 2.5) {$v_3$};
\node (4) at (1.25, 4.5) {$v_4$};
\node (5) at (3.75, 4.5) {$v_5$};
\node (6) at (5, 2.5) {$v_6$};
\begin{scope}[>={},every node/.style={fill=white,circle,inner sep=0pt,minimum size=12pt}]
\path [] (1) edge node {1} (3);
\path [] (4) edge node {2} (5);
\path [] (2) edge node {3} (4);
\path [] (5) edge node {4} (6);
\path [] (3) edge node {5} (5);
\path [] (1) edge node {6} (6);
\path [] (4) edge node {7} (6);
\path [] (1) edge node {8} (2);
\path [] (1) edge node {9} (5);
\path [] (2) edge node {10} (3);
\path [] (2) edge node {11} (6);
\path [] (3) edge node {12} (4);
\end{scope}
\end{tikzpicture}
}
\caption{CBO of $C_6^2$}
\label{fig:evensq}
\end{figure}
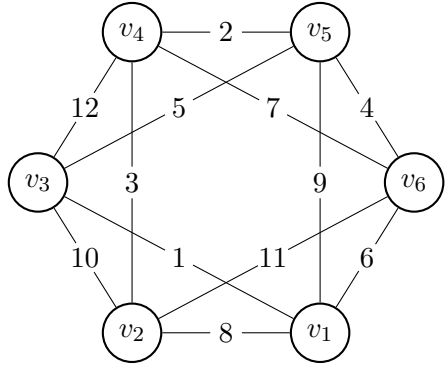

We can use a similar idea to show that $\mathcal{O}_{2k}$ is a CBO. A progression starting with $a_t$ is in the form of 
$$a_t, b_{k+t}, a_{t+1}, b_{k+t+1},\ldots, a_{k+t-2}, b_{t-2}, a_{k+t-1},$$ 
where the subscripts are modulo $n$. All edges $a_i$'s in this progression form two paths $v_{k+t} v_{k+t-2}\ldots$ and $v_{k+t+1} v_{k+t-1}\ldots$, while all edges $b_i$'s form the path $v_{k+t} v_{k+t+1}\ldots$. The final path shares one vertex with each of the first two paths, so the progression forms a spanning tree.

A progression starting with $b_t$ is in the form of
$$b_t, a_{k+t+1}, b_{t+1},a_{k+t+2}, \ldots, b_{k+t-1}, a_{t-1}, b_{k+t-1},$$
where the subscripts are modulo $n$. All edges $a_i$'s in this progression form two paths $v_{t-2} v_{t-4} \ldots$ and $v_{t-1} v_{t-3}\ldots$, while all edges $b_i$'s form the path $v_{t} v_{t+1}\ldots v_{k+t}$. The final path shares one vertex with each of the first two paths, so the progression forms a spanning tree.

Therefore $O_{2k}$ is a cyclic base ordering of $C_{2k}^2$, completing the proof.
\end{proof}

\section{Wheel and generalized wheel graphs}
A {\bf wheel graph} on $n\ge 4$ vertices, denoted by $W_n$, is a graph obtained by joining a single universal vertex to all vertices of a cycle $C_{n-1}$. This cycle is call the {\bf rim} of $W_n$, and each edge joining the universal vertex and a vertex on the rim is called a {\bf spoke}.

\subsection{CBO of wheel graphs}
Clearly, any wheel graph $W_n$ consists of exactly two edge-disjoint spanning trees, and thus $W_n$ has a CBO by Theorem~\ref{prev:CBO}. Here we give an explicit construction of a CBO for $W_n$.

\begin{theorem}
Every wheel graph $W_n$ has a cyclic base ordering.
\end{theorem}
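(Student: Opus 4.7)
The plan is to give an explicit cyclic ordering by interleaving spokes and rim edges with a carefully chosen offset, then verify that each window of $n-1$ consecutive edges spans. Setting up notation, label the universal vertex by $v_0$ and the rim vertices by $v_1,v_2,\ldots,v_{n-1}$ cyclically with indices modulo $n-1$, and write $s_i=v_0v_i$ for the spokes and $r_i=v_iv_{i+1}$ for the rim edges. Since $|E(W_n)|=2(n-1)$, each progression uses exactly half of all edges, which already suggests that the ordering ought to alternate between the two types.

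My proposed ordering is
\[
\mathcal{O}=\bigl(s_1,r_{1+c},\,s_2,r_{2+c},\,\ldots,\,s_{n-1},r_{n-1+c}\bigr),
\]
where the shift $c$ is chosen to be approximately $(n-1)/2$ (concretely, $c=\lfloor(n-1)/2\rfloor$, with rim indices interpreted mod $n-1$). The point is to pair each spoke $s_i$ with a rim edge lying roughly diametrically opposite on the cycle, so that in every window of $n-1$ consecutive edges the spokes and the rim edges occupy essentially disjoint arcs of the rim. Quick hand-checks for $n=4,5,6,7$ indicate that this shift does the job uniformly.

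To verify that every progression induces a spanning tree, I would analyze two cases according to what the window begins with. If it begins with a spoke $s_j$, the progression collects a block of consecutive spokes $s_j,s_{j+1},\ldots$ (whose leaves form a consecutive arc $v_j,v_{j+1},\ldots$ of rim vertices) and a block of consecutive rim edges $r_{j+c},r_{j+1+c},\ldots$ (which form a path on a second consecutive arc of the rim). The choice of $c$ is engineered so that the rim-path attaches to the spoke-star at exactly one rim vertex, yielding a connected acyclic subgraph with all $n$ vertices and $n-1$ edges. The case where the window begins with a rim edge is handled symmetrically.

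The main obstacle is the index arithmetic required to certify that the spoke-arc and the rim-arc meet in exactly one vertex: zero intersection would disconnect the center-star from the rim-path, while two or more intersections would create a cycle through $v_0$. This is complicated by the fact that rim indices wrap modulo $n-1$, so a rim path crossing from $v_{n-1}$ back to $v_1$ has to be checked carefully. The parity of $n-1$ also changes the exact counts of spokes versus rim edges in a window (one type has $\lceil(n-1)/2\rceil$, the other $\lfloor(n-1)/2\rfloor$), so I anticipate splitting the verification into separate cases for even and odd $n-1$, in the same style as the $C_n^2$ proof earlier in the paper. If a single shift $c$ does not cover both parities uniformly, the fallback is to give two explicit orderings (one per parity) and verify each directly.
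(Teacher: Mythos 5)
Your proposal is essentially the paper's own proof: the orderings $\mathcal{O}_{2k}=(a_1,b_k,a_2,b_{k+1},\ldots)$ and $\mathcal{O}_{2k+1}=(a_1,b_{k+1},a_2,b_{k+2},\ldots)$ used there are exactly your spoke--rim interleaving with shift $c=\lfloor(n-1)/2\rfloor$ (which is indeed the right shift for both parities), and the verification is precisely your ``star on one arc meets rim-path on the complementary arc in exactly one vertex'' argument. The only extra ingredient in the paper is the observation that the ordering is cyclically symmetric, so it suffices to check the first two progressions rather than all of them, which disposes of the index arithmetic you were worried about.
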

\begin{proof}
Suppose that the vertices of $W_n$ are $v_1,v_2,\ldots, v_n$ with $v_n$ being the universal vertex. Suppose that vertices $v_1,v_2,\ldots, v_{n-1}$ are in the order on the rim of $W_n$. For each $i=1,2,\ldots, n-1$, let $a_i$ be the edge $v_iv_n$. For each $i=1,2,\ldots, n-2$, let $b_i$ be the edge $v_iv_{i+1}$ and $b_{n-1} = v_{n-1}v_1$.

Case 1 : $n = 2k$. 
%Given graphs ${G}_{1}$ and ${G}_{2}$ where $ E({G}_{1}) = \{b_1,  b_2,\ldots, b_{2k-1}\}$ and $E({G}_{2}) = \{a_1, a_2,\ldots, a_{2k-1}\}$. As ${G}_{1}$ is a cycle graph, it is easy to see that ${G}_{1}$ will always have a CBO denoted by $$\mathcal{O}_{1} = (a_1, a_2,\ldots, a_{2k-1}).$$ Similarly, ${G}_{2}$ is a star graph and it is easy to see that ${G}_{2}$ will always have a CBO denoted by $$\mathcal{O}_{2} = (b_1, b_2,\ldots, b_{2k-1}).$$
We can construct an ordering of $E(G)$ by inserting $ b_i$ between $ a_{i-(k-1)}$ and $a_{i-(k-2)}$, and get $$\mathcal{O}_{2k} = (a_1, b_{k}, a_2, b_{k+1}, \ldots, a_k, b_{2k-1}, a_{k+1}, b_{2k}, \ldots, a_{2k-1}, b_{k-1}).$$

Figure~\ref{fig:w6} shows an ordering for $W_6$.
\begin{figure}[htb]
\centering{
\begin{tikzpicture}[every node/.style={circle,thick,draw}] 
\node (1) at (2.5, 0) {$v_1$};
\node (2) at (0, 1.75) {$v_2$};
\node (3) at (1, 4.5) {$v_3$};
\node (4) at (4, 4.5) {$v_4$};
\node (5) at (5, 1.75) {$v_5$};
\node (6) at (2.5, 2.25) {$v_6$};
\begin{scope}[>={},every node/.style={fill=white,circle,inner sep=0pt,minimum size=12pt}]
\path [] (1) edge node {1} (6);
\path [] (3) edge node {2} (4);
\path [] (2) edge node {3} (6);
\path [] (4) edge node {4} (5);
\path [] (3) edge node {5} (6);
\path [] (5) edge node {6} (1);
\path [] (4) edge node {7} (6);
\path [] (1) edge node {8} (2);
\path [] (5) edge node {9} (6);
\path [] (2) edge node {10} (3);
\end{scope}
\end{tikzpicture} 
}
\caption{CBO of $W_6$}
\label{fig:w6}
\end{figure}
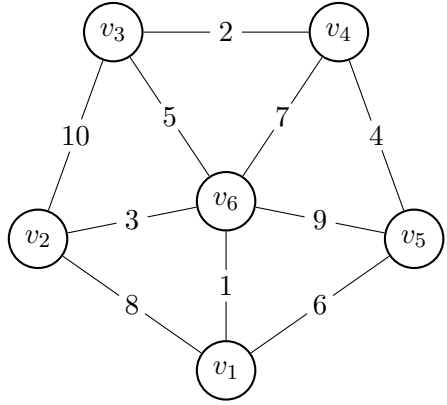

Since this ordering is symmetric, all progressions starting at $a_t$ have the same structure, for $t=1,2,\ldots, 2k-1$. Similarly, all progressions starting at $b_t$ also have the same structure. Thus it suffices to check the first two progressions in $\mathcal{O}_{2k}$, i.e., the progression $a_1, b_{k}, a_2, b_{k+1}, \ldots, a_k$ and the progression $b_{k}, a_2, b_{k+1}, \ldots, a_k, b_{2k-1}$.

In the progression $a_1, b_{k}, a_2, b_{k+1}, \ldots, a_k$, all edges $a_i$ induce a star with vertices $v_1, v_2,\cdots, v_k, v_{2k}$ where $v_{2k}$ is the center, and all edges $b_i$ induce a path $v_k v_{k+1}\cdots v_{2k-1}$.
Clearly, the star and the path only share one common vertex $v_k$, and thus the progression induces a spanning tree.

In the progression $b_{k}, a_2, b_{k+1}, \ldots, a_k, b_{2k-1}$, all edges $a_i$ induce a star with vertices $v_2,\cdots, v_k, v_{2k}$ where $v_{2k}$ is the center, and all edges $b_i$ induce a path $v_k v_{k+1}\cdots v_{2k-1}  v_1$. Clearly, the star and the path only share one common vertex $v_k$, and thus the progression induces a spanning tree.

Thus $\mathcal{O}_{2k}$ is a cyclic base ordering of $G$.

\bigskip
Case 2: $n = 2k+1$. Similarly, we can construct an ordering of $E(G)$ by inserting $ b_i$ between $ a_{i-k}$ and $a_{i-(k-1)}$, and get
$$\mathcal{O}_{2k+1} = (a_1, b_{k+1}, a_2, b_{k+2}, \ldots, a_k, b_{2k}, a_{k+1}, b_{1}, \ldots, a_{2k-1}, b_{k-1},a_{2k},b_{k}).$$
Figure~\ref{fig:w7} shows an ordering for $W_7$.
\begin{figure}[htb]
\centering{
\begin{tikzpicture}[every node/.style={circle,thick,draw}] 
\node (1) at (3.75, 0.5) {$v_1$};
\node (2) at (1.25, 0.5) {$v_2$};
\node (3) at (0, 2.5) {$v_3$};
\node (4) at (1.25, 4.5) {$v_4$};
\node (5) at (3.75, 4.5) {$v_5$};
\node (6) at (5, 2.5) {$v_6$};
\node (7) at (2.5, 2.5) {$v_7$};
\begin{scope}[>={},every node/.style={fill=white,circle,inner sep=0pt,minimum size=12pt}]
\path [] (1) edge node {1} (7);
\path [] (4) edge node {2} (5);
\path [] (2) edge node {3} (7);
\path [] (5) edge node {4} (6);
\path [] (3) edge node {5} (7);
\path [] (6) edge node {6} (1);
\path [] (4) edge node {7} (7);
\path [] (1) edge node {8} (2);
\path [] (5) edge node {9} (7);
\path [] (2) edge node {10} (3);
\path [] (6) edge node {11} (7);
\path [] (3) edge node {12} (4);
\end{scope}
\end{tikzpicture}
}
\caption{CBO of $W_7$}
\label{fig:w7}
\end{figure}
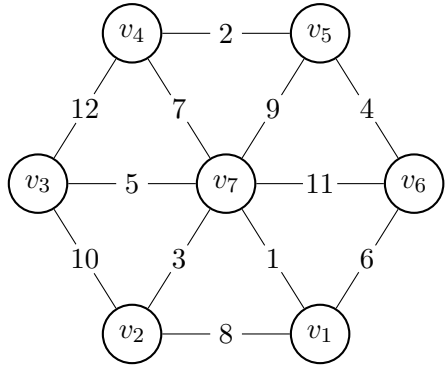

Since $\mathcal{O}_{2k+1}$ is symmetric, all progressions starting at $a_t$ or $b_t$ have the same structure. Thus it suffices to verify the first two progressions. In the progression $a_1, b_{k+1}, a_2, b_{k+2}, \ldots, a_k, b_{2k}$, all edges $a_i$'s induce a star with vertices $v_1, v_2,\cdots, v_k, v_{2k+1}$ where $v_{2k+1}$ is the center, and all edges $b_i$'s induce a path $v_{k+1} v_{k+2}\cdots v_{2k} v_1$. Clearly, the star and the path only share one common vertex $v_1$, and thus the progression induces a spanning tree.

In the progression $b_{k+1},a_2,b_{k+2},a_3,\ldots,b_{2k}, a_{k+1}$, all edges $a_i$'s induce a star with vertices $v_2, v_3\cdots, v_{k+1}, v_{2k+1}$ where $v_{2k+1}$ is the center, and all edges $b_i$'s induce a path $v_{k+1} v_{k+2}\cdots v_{2k} v_1$. Clearly, the star and the path only share one common vertex $v_{k+1}$, and thus the progression induces a spanning tree.

Thus $\mathcal{O}_{2k+1}$ is a cyclic base ordering of $G$, completing the proof.
\end{proof}

 \subsection{CBO of double wheel graphs}
A {\bf double wheel graph} on $n\ge 5$ vertices, denoted by $W^2_n$, is a graph obtained by joining two single universal vertices to all vertices of a cycle $C_{n-2}$. Any double wheel graph on $n$ vertices has $3n-6$ edges.

\begin{theorem}
Every double wheel graph has a cyclic base ordering.
\end{theorem}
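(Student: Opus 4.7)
My plan is to construct an explicit cyclic ordering that interleaves the three types of edges --- spokes to one hub, spokes to the other hub, and rim edges --- in the spirit of the single wheel construction of the previous subsection. Label the rim vertices $v_1, v_2, \ldots, v_{n-2}$ in cyclic order, let $u$ and $w$ denote the two hubs, and set $a_i = uv_i$, $c_i = wv_i$, and $b_i = v_iv_{i+1}$ for $i = 1, \ldots, n-2$, with indices taken modulo $n-2$. I propose an ordering of the form
$$\mathcal{O} = (a_1, b_{1+s}, c_{1+t}, a_2, b_{2+s}, c_{2+t}, \ldots, a_{n-2}, b_{n-2+s}, c_{n-2+t}),$$
where the shifts $s$ and $t$ are to be chosen (depending on $n$) so that each progression of length $n-1$ forms a spanning tree.

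The construction has a built-in cyclic symmetry: rotating the rim labelling by one step, $v_i \mapsto v_{i+1}$, sends the ordering $\mathcal{O}$ to itself after a cyclic shift by $3$ positions. Consequently all progressions starting at an $a$-edge are equivalent to one another, and similarly for those starting at a $b$- or $c$-edge. Hence it suffices to verify that only three specific progressions, namely those starting at $a_1$, at $b_{1+s}$, and at $c_{1+t}$, induce spanning trees.

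For each of these three progressions, I will enumerate the constituent edges explicitly. The $a$-edges induce a star centered at $u$, the $c$-edges induce a star centered at $w$, and the $b$-edges induce a union of one or two short paths on the rim cycle. Using the chosen shifts, I will check that (i) together these pieces touch every rim vertex as well as both hubs, so the progression spans all $n$ vertices, and (ii) no cycle is formed, so that the progression, having exactly $n-1$ edges, is a spanning tree.

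The main obstacle is that the window length $n-1$ need not be a multiple of $3$, so the numbers of $a$-, $b$-, and $c$-edges present in a single progression depend on $n \bmod 3$. This forces a case distinction by the residue of $n$ modulo $3$, and within each case the correct shifts $s, t$ must be selected to avoid the basic forbidden substructures: the triangles $u$-$v_i$-$v_{i+1}$-$u$ (two $a$-edges plus a shared rim edge), $w$-$v_i$-$v_{i+1}$-$w$ (the analogue with $c$-edges), and the 4-cycle $u$-$v_i$-$w$-$v_j$-$u$ (two $a$-edges and two $c$-edges meeting at the same pair of rim vertices). Once the appropriate shifts are identified for each residue class, the spanning-tree verification for the three representative progressions becomes a routine but careful enumeration.
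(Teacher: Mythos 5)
Your proposal matches the paper's proof essentially exactly: the paper orders the edges as interleaved triples $a_t, b_{t+s}, c_{t+t'}$, splits into cases according to $n \bmod 3$, invokes the rotational symmetry to reduce the check to the first three progressions, and verifies each one as a union of a star at one hub, a rim path, and a star at the other hub meeting in single vertices. The only detail you leave open --- the explicit shifts $s$ and $t'$ --- is resolved in the paper by offsetting the $b$-indices by roughly one third of the rim length and the $c$-indices by roughly two thirds (e.g.\ $s=k-1$, $t'=2k-1$ when $n=3k$), which is precisely what your spanning and acyclicity constraints force.
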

\begin{proof}
Let $G$ be a double wheel graph on $n$ vertices. Suppose that the vertices of $G$ are $v_0$, $v_1$,$\ldots$,$v_{n-1}$ with $v_0$ and $v_{n-1}$ being the two universal vertices, and $v_1, v_2,\ldots, v_{n-2}$ are in order on the rim of $G$. For each $i = 1, 2,\ldots, n-2$, let $a_i$ be the edge $v_iv_0$ and $c_i$ be the edge $v_iv_{n-1}$. For each $i = 1, 2,\ldots, n-3$, let $b_i$ be the edge $v_iv_{i+1}$ and $b_{n-2} = v_{n-2}v_1$.

%Let $k$ be an integer such that $n=3k$, or $n=3k+1$ or $n=3k+2$. 
We divide into three cases: $n=3k$, $n=3k+1$ and $n=3k+2$. 

{\bf Case 1: $n=3k$}. We define an edge ordering $\mathcal{O}$ of $G$ by 
$$\mathcal{O} = (a_1, b_{k}, c_{2k}, a_2, b_{k+1}, c_{2k+1}, \ldots, a_{n-2}, b_{k-1}, c_{2k-1}).$$
In other words, it is an ordered list of $a_t, b_{t+k-1}, c_{t+2k-1}$ for $t=1,2,\ldots, n-2$. Notice that any subscript is at most $n-2$. If some subscript $i$ is larger than $n-2$, we use $i\pmod{n-2}$. An example for $n=6$ is shown in Figure~\ref{fig:dw6}.

\begin{figure}[htb]
\centering{
\begin{tikzpicture}[every node/.style={circle,thick,draw}] 
\node (1) at (3, 7) {$v_0$};
\node (2) at (0, 0) {$v_1$};
\node (3) at (2, 2.5) {$v_2$};
\node (4) at (4, 2.5) {$v_3$};
\node (5) at (6, 0) {$v_4$};
\node (6) at (3, 1.5) {$v_5$};
\begin{scope}[>={},every node/.style={fill=white,circle,inner sep=0pt,minimum size=12pt}]
\path [] (1) edge node {1} (2);
\path [] (1) edge node {4} (3);
\path [] (1) edge node {7} (4);
\path [] (1) edge node {10} (5);
\path [] (3) edge node {2} (4);
\path [] (4) edge node {5} (5);
\path [] (5) edge node {8} (2);
\path [] (2) edge node {11} (3);
\path [] (5) edge node {3} (6);
\path [] (2) edge node {6} (6);
\path [] (3) edge node {9} (6);
\path [] (4) edge node {12} (6);
\end{scope}
\end{tikzpicture}
}
\caption{CBO of $W^2_6$}
\label{fig:dw6}
\end{figure}

By symmetry, all progressions starting at any $a_t$ have the same structure, and similarly for progressions starting at $b_t$ and $c_t$. Thus it suffices to verify the first three progressions in $\mathcal{O}$. 

Notice that every progression contains $n-1 = 3k-1$ edges. The first progression is
$$(a_1, b_{k}, c_{2k}, a_2, b_{k+1}, c_{2k+1}, \ldots, a_{k-1}, b_{2k-2}, c_{3k-2}, a_k, b_{2k-1}).$$
The edges $a_1, a_2,\ldots, a_k$ induce a star by joining $v_1,v_2,\ldots,v_k$ to $v_0$. The edges $b_k, b_{k+1},\ldots, b_{2k-1}$ induce a path $v_kv_{k+1}\cdots v_{2k}$, while the edges $c_{2k}, c_{2k+1},\ldots,c_{3k-2}$ (notice that $3k-2=n-2$) induce a star by joining $v_{2k},v_{2k+1},\ldots,v_{n-2}$ to the vertex $v_{n-1}$. Clearly all vertices are connected to form a spanning tree.

The second progression is
$$(b_{k}, c_{2k}, a_2, b_{k+1}, c_{2k+1}, \ldots, a_{k-1}, b_{2k-2}, c_{3k-2}, a_k, b_{2k-1}, c_{3k-1}).$$
The edges $a_2,\ldots, a_k$ induce a star by joining $v_2,\ldots,v_k$ to $v_0$, and the edges $b_k, b_{k+1},\ldots, b_{2k-1}$ induce a path $v_kv_{k+1}\cdots v_{2k}$. The edges $c_{2k}, c_{2k+1},\ldots,c_{3k-2}, c_{3k-1}$ (notice that $3k-2=n-2$ and $3k-1=n-1 > n-2$; thus $c_{3k-2}$ is $c_{n-2}$ and $c_{3k-1}$ is actually $c_1$) induce a star by joining $v_{2k},v_{2k+1},\ldots,v_{n-2}$ and $v_1$ to the vertex $v_{n-1}$. Clearly all vertices are connected to form a spanning tree.

The third progression is
$$(c_{2k}, a_2, b_{k+1}, c_{2k+1}, \ldots, a_{k-1}, b_{2k-2}, c_{3k-2}, a_k, b_{2k-1}, c_{3k-1}, a_{k+1}).$$
The edges $a_2,\ldots, a_k, a_{k+1}$ induce a star by joining $v_2,\ldots,v_k, v_{k+1}$ to the vertex $v_0$, and the edges $b_{k+1},\ldots, b_{2k-1}$ induce a path $v_{k+1}\cdots v_{2k}$. The edges $c_{2k}, c_{2k+1},\ldots,c_{3k-2}, c_{3k-1}$ (notice that $3k-2=n-2$ and $3k-1=n-1 > n-2$; thus $c_{3k-2}$ is $c_{n-2}$ and $c_{3k-1}$ is actually $c_1$) induce a star by joining $v_{2k},v_{2k+1},\ldots,v_{n-2}$ and $v_1$ to the vertex $v_{n-1}$. Clearly all vertices are connected to form a spanning tree.

{\bf Case 2: $n=3k+1$}. We define an edge ordering $\mathcal{O}$ of $G$ by
$$\mathcal{O} = (a_1, c_{2k+1}, b_{k+1}, a_2, c_{2k+2}, b_{k+2},\ldots, a_{n-2}, c_{2k}, b_{k}).$$
In other words, it is an ordered list of $a_t, c_{t+2k}, b_{t+k}$ for $t=1,2,\ldots, n-2$. Notice that any subscript is at most $n-2$. If some subscript $i$ is larger than $n-2$, we use $i\pmod{n-2}$. An example for $n=7$ is shown in Figure~\ref{fig:dw7}.

\begin{figure}[htb]
\centering{
\begin{tikzpicture}[every node/.style={circle,thick,draw}] 
\node (1) at (3, 7) {$v_0$};
\node (2) at (0, 0) {$v_1$};
\node (3) at (1.5, 2.5) {$v_2$};
\node (4) at (3, 4) {$v_3$};
\node (5) at (4.5, 2.5) {$v_4$};
\node (6) at (6, 0) {$v_5$};
\node (7) at (3, 1.5) {$v_6$};
\begin{scope}[>={},every node/.style={fill=white,circle,inner sep=0pt,minimum size=12pt}]
\path [] (1) edge[bend right=10] node {1} (2);
\path [] (3) edge node {15} (4);
\path [] (6) edge node {2} (7);
\path [] (1) edge node {4} (3);
\path [] (4) edge node {3} (5);
\path [] (2) edge node {5} (7);
\path [] (1) edge node {7} (4);
\path [] (5) edge node {6} (6);
\path [] (3) edge node {8} (7);
\path [] (1) edge node {10} (5);
\path [] (6) edge node {9} (2);
\path [] (7) edge node {11} (4);
\path [] (6) edge[bend right=10] node {13} (1);
\path [] (3) edge node {12} (2);
\path [] (5) edge node {14} (7);
\end{scope}
\end{tikzpicture}
}
\caption{CBO of $W^2_7$}
\label{fig:dw7}
\end{figure}

Every progression contains $n-1 = 3k$ edges. The first three progressions are
$$(a_1, c_{2k+1}, b_{k+1}, a_2, c_{2k+2}, b_{k+2},\ldots, a_k, c_{3k}, b_{2k}),$$
$$(c_{2k+1}, b_{k+1}, a_2, c_{2k+2}, b_{k+2},\ldots, a_k, c_{3k}, b_{2k}, a_{k+1}),$$
$$(b_{k+1}, a_2, c_{2k+2}, b_{k+2},\ldots, a_k, c_{3k}, b_{2k}, a_{k+1}, c_{3k+1}).$$
We can show that each progression induces a spanning tree. It is quite similar as above, and thus we omit the proof.

{\bf Case 3: $n=3k+2$}. We define an edge ordering $\mathcal{O}$ of $G$ in the same way as Case 2, that is
$$\mathcal{O} = (a_1, c_{2k+1}, b_{k+1}, a_2, c_{2k+2}, b_{k+2},\ldots, a_{n-2}, c_{2k}, b_{k}).$$
In other words, it is an ordered list of $a_t, c_{t+2k}, b_{t+k}$ for $t=1,2,\ldots, n-2$. Notice that any subscript is at most $n-2$. If some subscript $i$ is larger than $n-2$, we use $i\pmod{n-2}$. An example for $n=8$ is shown in Figure~\ref{fig:dw8}.

\begin{figure}[htb]
\centering{
\begin{tikzpicture}[every node/.style={circle,thick,draw}] 
\node (1) at (3.5, 7) {$v_0$};
\node (2) at (0, 0) {$v_1$};
\node (3) at (1, 1.5) {$v_2$};
\node (4) at (2.5, 3) {$v_3$};
\node (5) at (4.5, 3) {$v_4$};
\node (6) at (6,1.5) {$v_5$};
\node (7) at (7, 0) {$v_6$};
\node (8) at (3.5,1.5) {$v_7$};
\begin{scope}[>={},every node/.style={fill=white,circle,inner sep=0pt,minimum size=12pt}]
\path [] (1) edge[bend right=10] node {1} (2);
\path [] (1) edge node {4} (3);
\path [] (1) edge node {7} (4);
\path [] (1) edge node {10} (5);
\path [] (1) edge node {13} (6);
\path [] (1) edge[bend left=10] node {16} (7);
\path [] (2) edge node {15} (3);
\path [] (3) edge node {18} (4);
\path [] (4) edge node {3} (5);
\path [] (5) edge node {6} (6);
\path [] (6) edge node {9} (7);
\path [] (7) edge node {12} (2);
\path [] (2) edge node {8} (8);
\path [] (3) edge node {11} (8);
\path [] (4) edge node {14} (8);
\path [] (5) edge node {17} (8);
\path [] (6) edge node {2} (8);
\path [] (7) edge node {5} (8);
\end{scope}
\end{tikzpicture}
}
\caption{CBO of $W^2_8$}
\label{fig:dw8}
\end{figure}

Every progression contains $n-1 = 3k+1$ edges. The first three progressions are
$$(a_1, c_{2k+1}, b_{k+1}, a_2, c_{2k+2}, b_{k+2},\ldots, a_k, c_{3k}, b_{2k}, a_{k+1}),$$
$$(c_{2k+1}, b_{k+1}, a_2, c_{2k+2}, b_{k+2},\ldots, a_k, c_{3k}, b_{2k}, a_{k+1}, c_{3k+1}),$$
$$(b_{k+1}, a_2, c_{2k+2}, b_{k+2},\ldots, a_k, c_{3k}, b_{2k}, a_{k+1}, c_{3k+1}, b_{2k+1}).$$
Each progression induces a spanning tree. The proof is again similar to the above, and thus is omitted.

Therefore $\mathcal{O}$ is a cyclic base ordering of $G$.

\end{proof}

\section{Fan and broken fan graphs}
A {\bf fan graph} on $n$ vertices, denoted by $F_n$, is a graph obtained by joining a single universal vertex to all vertices of a path $P_{n-1}$. Each edge incident to the universal vertex is called a {\bf spoke}. Because any fan graph $F_n$ is a 2-tree, $F_n$ has a cyclic base ordering from Theorem \ref{prev:CBO}.

A {\bf broken fan graph} is a graph obtained from a fan graph by removing some inner spokes. A general broken fan graph may not be uniformly dense, and thus may not have a CBO. We can define the following ``uniformly'' broken fan graphs. Let $F_n(t)$ denote the broken fan graph on $n$ vertices such that the spokes are uniformly distributed every $t$ vertices. A broken fan graph $F_8(2)$ is shown in Figure~\ref{fig:f82}. Suppose there are $r$ spokes in $F_n(t)$. Then $n= (r-1)t +2$ and $F_n(t)$ has $(r-1)t +r$ edges.
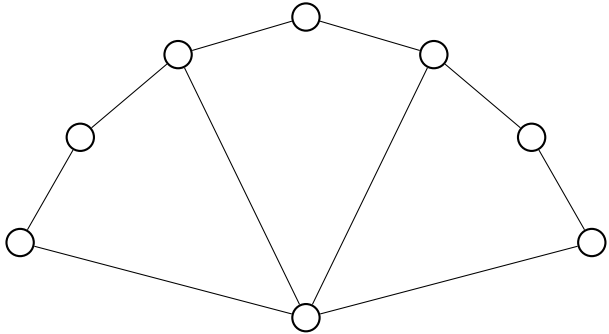
\begin{figure}[htb]
\centering
\begin{tikzpicture}[every node/.style={circle,thick,draw}] 
\node (1) at (4, 0) {};
\node (2) at (7.8, 1) {};
\node (3) at (7, 2.4) {};
\node (4) at (5.7, 3.5) {};
\node (5) at (4, 4) {};
\node (6) at (2.3, 3.5) {};
\node (7) at (1, 2.4) {};
\node (8) at (0.2, 1) {};
\begin{scope}[>={},every node/.style={fill=white,circle,inner sep=0pt,minimum size=12pt}]
\path [] (1) edge (2);
\path [] (1) edge (4);
\path [] (1) edge (6);
\path [] (1) edge (8);
\path [] (2) edge (3);
\path [] (3) edge (4);
\path [] (4) edge (5);
\path [] (5) edge (6);
\path [] (6) edge (7);
\path [] (7) edge (8);
\end{scope}
\end{tikzpicture}
\caption{A broken fan graph $F_8(2)$}
\label{fig:f82}
\end{figure}

We will show that $F_n(t)$ has a CBO. We first deal with the case of $t=2$, and then generalize the idea to any $t\ge 2$.
\begin{theorem}\label{brokenfan:fn2}
$F_n(2)$ has a cyclic base ordering.
\end{theorem}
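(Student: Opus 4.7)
Write $n=2r$, so $F_n(2)$ has $r$ spokes incident to the universal vertex $v_n$ and $n-2=2r-2$ path edges along $v_1v_2\cdots v_{n-1}$, giving $|E|=3r-2$ edges in total, while each progression contains $n-1=2r-1$ edges. Label the spokes $a_i=v_{2i-1}v_n$ for $i=1,\dots,r$ and the path edges $b_i=v_iv_{i+1}$ for $i=1,\dots,2r-2$. My plan is to exhibit one explicit cyclic ordering and verify it by case analysis on which edge begins the progression.

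The ordering I propose is
\[
\mathcal{O}=(a_1,a_2,\ldots,a_r,\;b_1,b_3,\ldots,b_{2r-3},\;b_2,b_4,\ldots,b_{2r-2}),
\]
consisting of three consecutive blocks: all spokes in order (length $r$), the odd-indexed path edges in order (length $r-1$), and the even-indexed path edges in order (length $r-1$). Dually, a progression is a spanning tree iff the complementary window of $r-1$ omitted edges does not contain any edge cut. The design principle is that every two-edge bond of $F_n(2)$, namely $\{b_{2j-1},b_{2j}\}$ around a degree-two path vertex $v_{2j}$, the pairs $\{a_1,b_1\}$ and $\{a_1,b_2\}$ around $v_1$, and the pairs $\{a_r,b_{2r-3}\}$ and $\{a_r,b_{2r-2}\}$ around $v_{n-1}$, has its two edges placed in different blocks of $\mathcal{O}$ at cyclic distance at least $r-1$, which is just wide enough to prevent both edges from falling in a single omitted window of length $r-1$.

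Verification would partition progressions by the block containing the starting edge. A progression beginning at $a_i$ consists of the spokes $a_i,\ldots,a_r$, all $r-1$ odd-indexed path edges, and the first $i-1$ even-indexed path edges $b_2,b_4,\ldots,b_{2(i-1)}$; one beginning at $b_{2j-1}$ in the odd block wraps to contain $b_{2j-1},b_{2j+1},\ldots,b_{2r-3}$, the entire even block, and the first $j$ spokes $a_1,\ldots,a_j$; and one beginning at $b_{2j}$ in the even block contains $b_{2j},b_{2j+2},\ldots,b_{2r-2}$, all $r$ spokes, and the first $j-1$ odd $b$-edges $b_1,b_3,\ldots,b_{2j-3}$. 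In each subcase I would describe the included spokes as a partial star centered at $v_n$ and the included path edges as a disjoint union of subpaths of the path $v_1v_2\cdots v_{n-1}$, and then check that every such subpath contains at least one odd-indexed endpoint still attached to $v_n$ through a remaining spoke. Together with the star, this yields a connected subgraph on $2r$ vertices with $2r-1$ edges, hence a spanning tree. The main obstacle is the bookkeeping in the boundary cases where the omitted window straddles two blocks, but the three-block structure together with the cyclic-distance guarantee for the two-edge bonds makes the subcase analysis routine.
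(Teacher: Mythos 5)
Your ordering is correct and your argument is a genuinely different route from the paper's. The paper proves this theorem by induction on the number of spokes $r$: the base case is $C_4$, and the inductive step deletes the two rim vertices $v_2,v_3$, takes a CBO of the smaller $F_{n-2}(2)$ normalized so that the spoke $e_1=v_1v_4$ gets label $1$, and splices the three new edges $e_2,e_3,e_4$ into that ordering at roughly evenly spaced positions; the only nontrivial progression is the one containing all three new edges, which is handled by observing that $\{e_2,e_3,e_4\}$ plays the role of $e_1$ in the old CBO. Your proof instead writes down a closed-form three-block ordering (all spokes, then the odd path edges, then the even path edges) and verifies all progressions directly; I checked the three progression types and the bookkeeping is right, so the construction works. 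What each buys: your version gives an explicit non-recursive CBO and a self-contained verification, and your observation that every $2$-edge bond sits at cyclic distance at least $r-1$ cleanly explains \emph{why} the design works; the paper's inductive splicing is less explicit but is deliberately built as a template that it then reuses almost verbatim for $F_n(t)$, $W(n,r)$, and maximal $2$-degenerate graphs, which your block construction would not generalize to as directly. One small correction to your verification criterion: you should ask that every subpath of included path edges contain some odd-indexed vertex whose spoke survives, not an odd-indexed \emph{endpoint} --- for the progression starting at $a_i$ with $1<i<r$, the included path edges form the long subpath $v_1v_2\cdots v_{2i}$, whose endpoints are $v_1$ (its spoke $a_1$ is omitted) and the even vertex $v_{2i}$, and which attaches to the star only through the interior vertex $v_{2i-1}$.
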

\begin{proof}
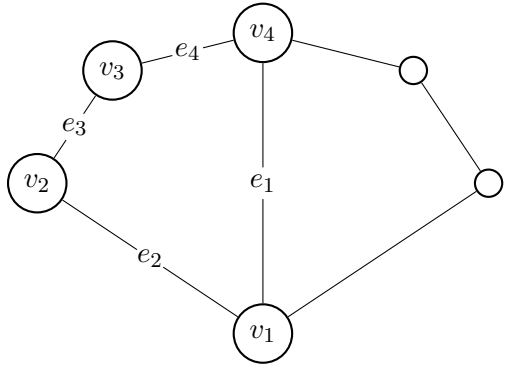
\begin{figure}[htb]
\centering
\begin{tikzpicture}[every node/.style={circle,thick,draw}] 
\node (1) at (4, 0) {$v_1$};
\node (2) at (7, 2) {};
\node (3) at (6, 3.5) {};
\node (4) at (4, 4) {$v_4$};
\node (5) at (2, 3.5) {$v_3$};
\node (6) at (1, 2) {$v_2$};
\begin{scope}[>={},every node/.style={fill=white,circle,inner sep=0pt,minimum size=12pt}]
\path [] (1) edge (2);
\path [] (1) edge node {$e_1$} (4);
\path [] (1) edge node {$e_2$} (6);
\path [] (2) edge (3);
\path [] (3) edge (4);
\path [] (4) edge node {$e_4$} (5);
\path [] (5) edge node {$e_3$} (6);
\end{scope}
\end{tikzpicture}
\caption{Induction for $F_n(2)$}
\label{induction:f62}
\end{figure}
Clearly $n=2r$, where $r$ is the number of spokes. We will prove this by induction on $r$. Our inductive hypothesis is for all smaller $r$, $F_n(2)$ where $n=2r$ has a cyclic base ordering. Our base case is when $r=2$, which is $C_4$ (a cycle on 4 vertices). Let $G$ be the $F_n(2)$ where $n=2r$. Then $G$ has $3r-2$ edges. Suppose the universal vertex is $v_1$ and other vertices are $v_2, v_3,\ldots, v_n$ on a path $P_{n-1}$.

As shown in Figure~\ref{induction:f62}, let $e_1 = v_4v_1$, $e_2 = v_1v_2$, $e_3=v_2v_3$, $e_4=v_3v_4$. Let $G'$ be $G - v_2 - v_3$, which is a $F_n(2)$ where $n=2(r-1)$. By our inductive hypothesis, $G'$ has a cyclic base ordering, denoted by $\mathcal{O}'$. Without loss of generality, let $\mathcal{O}'(e_1) = 1$. We define an edge ordering $\mathcal{O}$ of $G$ as the following:
\begin{align*}
\mathcal{O}(e) = \begin{cases}
1 & \text{ if } e = e_1 \\
2 & \text{ if } e = e_2\\
\mathcal{O}'(e)+1 & \text{ if } e \in G' \text{ and } 2 \leq \mathcal{O}'(e) \leq r-1 \\
r+1 & \text{ if } e = e_3\\
\mathcal{O}'(e)+2 & \text{ if } e \in G' \text{ and } r \leq \mathcal{O}'(e) \leq 2r-3 \\
2r & \text{ if } e = e_4\\
\mathcal{O}'(e)+3 & \text{ if } e \in G' \text{ and } 2r-2 \leq \mathcal{O}'(e) \leq 3r-5.
\end{cases}
\end{align*}
It can be seen that every progression that contains exactly two of the newly added edges in $\{e_2, e_3, e_4\}$ forms a spanning tree. The only progression that does not have exactly two of the newly added edges is $(e_2, \mathcal{O}^{-1}(3), \mathcal{O}^{-1}(4), \ldots, e_3, \mathcal{O}^{-1}(r+2), \mathcal{O}^{-1}(r+3), \ldots, e_4)$, which contains all three edges from $\{e_2, e_3, e_4\}$. If this progression contains a cycle, then it must contains all of $e_2, e_3, e_4$. However, replacing $e_2, e_3, e_4$ by $e_1$ will result a cycle, violating $\mathcal{O}'$ is a CBO of $G'$. Thus this progression must induce a spanning tree, and so $\mathcal{O}$ is a cyclic base ordering of $G$.
\end{proof}

\begin{theorem}\label{brokenfan:uniform}
$F_n(t)$ has a cyclic base ordering.
\end{theorem}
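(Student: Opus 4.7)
I would prove Theorem~\ref{brokenfan:uniform} by induction on the number of spokes $r$, directly generalizing the argument of Theorem~\ref{brokenfan:fn2}. The base case $r=2$ gives $F_{t+2}(t)$, which is simply the cycle $C_{t+2}$ (formed by the path $v_2 v_3 \cdots v_{t+2}$ together with the spokes $v_1 v_2$ and $v_1 v_{t+2}$) and hence has a CBO. For the inductive step, set $e_1 = v_1 v_{t+2}$, $e_2 = v_1 v_2$, and $e_j = v_{j-1} v_j$ for $3 \le j \le t+2$; deleting the internal vertices $v_2,\ldots,v_{t+1}$ from $G = F_n(t)$ yields $G' = F_{n-t}(t)$ with $r-1$ spokes and removes precisely the $t+1$ edges $e_2, e_3, \ldots, e_{t+2}$ while keeping $e_1$. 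By the inductive hypothesis, $G'$ has a CBO $\mathcal{O}'$, which after a cyclic shift I may take with $\mathcal{O}'(e_1)=1$.

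I would then define $\mathcal{O}$ on $G$ by placing $e_{j+1}$ at position $(j-1)(r-1)+2$ for $j = 1,2,\ldots,t+1$ and filling the remaining positions with the edges of $G'$ in the cyclic order of $\mathcal{O}'$. The resulting cyclic ordering has length $(r-1)t+r$; the $t+2$ distinguished edges $\{e_1,e_2,\ldots,e_{t+2}\}$ divide it into one gap of size $0$ (between $e_1$ and $e_2$) and $t+1$ gaps of size $r-2$. The key structural fact is that extracting the $G'$-edges from $\mathcal{O}$ in cyclic order recovers $\mathcal{O}'$ exactly; hence the $G'$-edges inside any $\mathcal{O}$-progression form a cyclically consecutive block in $\mathcal{O}'$.

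To verify every progression is a spanning tree, observe that its cyclic complement has length $r-1$, which exceeds each non-trivial gap ($r-2$); so the complement must contain at least one distinguished edge, and since the only two consecutive distinguished edges with gap $\le r-3$ are $e_1,e_2$, it contains either a single distinguished edge or exactly the pair $\{e_1,e_2\}$. Thus progressions split into class~(I), the unique progression whose complement contains only $e_1$ (it starts at $e_2$ and contains all $t+1$ new edges), and class~(II), the remaining $(r-1)t+r-1$ progressions, each containing exactly $t$ new edges. For class~(II), the $G'$-edges of the progression number $n'-1 = (r-2)t+1$, so by the structural fact they form a full $\mathcal{O}'$-progression and hence a spanning tree of $G'$; the $t$ new edges form one or two subpaths of $v_1 v_2 \cdots v_{t+2}$ with endpoints at $v_1$ and/or $v_{t+2}$, which attach the internal vertices as pendant chains to that spanning tree, yielding a spanning tree of $G$. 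For class~(I), the $G'$-edges number $(r-2)t$ and form the $\mathcal{O}'$-progression at position $1$ with $e_1$ removed, i.e., a spanning forest of $G'$ whose two components contain $v_1$ and $v_{t+2}$ respectively, while the $t+1$ new edges form the path $v_1 v_2 \cdots v_{t+2}$ bridging the two components.

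The main delicate step is the bookkeeping: verifying the cyclic $\mathcal{O}$-to-$\mathcal{O}'$ correspondence on $G'$-edges and tracking exactly where the $G'$-block begins and ends in $\mathcal{O}'$ for each progression. As an alternative treatment of class~(I) in the spirit of Theorem~\ref{brokenfan:fn2}, one can argue by contradiction: any cycle in the class~(I) progression must pass through each degree-$2$ vertex $v_2,\ldots,v_{t+1}$ and hence use all $t+1$ new edges; replacing the path $v_1 v_2 \cdots v_{t+2}$ by $e_1$ then produces a cycle in the $\mathcal{O}'$-progression at position $1$, contradicting that $\mathcal{O}'$ is a CBO of $G'$.
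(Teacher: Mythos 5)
Your proposal is correct and follows essentially the same route as the paper: the identical inductive construction on the number of spokes $r$, peeling off the internal vertices $v_2,\ldots,v_{t+1}$, interleaving the new path edges $e_2,\ldots,e_{t+2}$ at the same positions into a CBO $\mathcal{O}'$ of $G'=F_{n-t}(t)$ normalized so that $\mathcal{O}'(e_1)=1$, and singling out the one progression (starting at $e_2$) that contains all $t+1$ new edges. Your verification is in fact more detailed than the paper's, which merely asserts that any progression with exactly $t$ new edges is a spanning tree and that the exceptional progression amounts to replacing $e_1$ by the path $v_1v_2\cdots v_{t+2}$.
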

\begin{proof}
Clearly $n=(r-1)t+2$ and $F_n(t)$ has $(r-1)t+r$ edges. We use the same notations as Theorem~\ref{brokenfan:fn2}, and let $e_1 = v_1v_{t+2}$, $e_2 = v_1v_2$ and $e_i = v_{i-1}v_i$ for $i=3,4,\ldots, t+2$. 
%(\textcolor{red}{Probably it is a good idea to add a figure to explain the induction for $F_n(t)$, similar to Figure~\ref{induction:f62}}.)
Here is an example for $t=3$.
\begin{figure}[htb]
\centering
\begin{tikzpicture}[every node/.style={circle,thick,draw}] 
\node (1) at (4, 0) {$v_1$};
\node (2) at (7.8, 1) {};
\node (3) at (7, 2.4) {};
\node (4) at (5.7, 3.5) {};
\node (5) at (4, 4) {$v_5$};
\node (6) at (2.3, 3.5) {$v_4$};
\node (7) at (1, 2.4) {$v_3$};
\node (8) at (0.2, 1) {$v_2$};
\begin{scope}[>={},every node/.style={fill=white,circle,inner sep=0pt,minimum size=12pt}]
\path [] (1) edge (2);
\path [] (1) edge node {$e_1$} (5);
\path [] (1) edge node {$e_2$} (8);
\path [] (2) edge (3);
\path [] (3) edge (4);
\path [] (4) edge (5);
\path [] (5) edge node {$e_5$} (6);
\path [] (6) edge node {$e_4$} (7);
\path [] (7) edge node {$e_3$} (8);
\end{scope}
\end{tikzpicture}
\caption{Induction for $F_n(3)$}
\label{fig:fn3}
\end{figure}
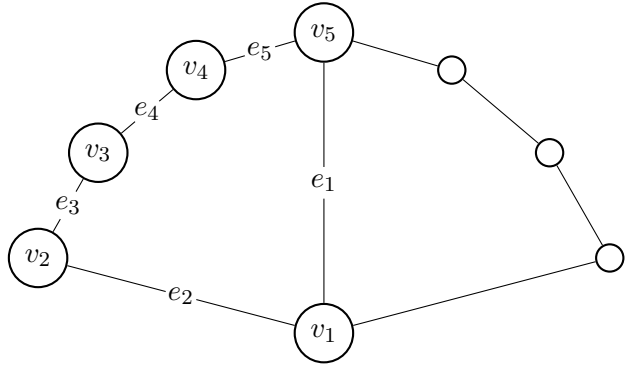

Define the following edge ordering of $G$ from a CBO $\mathcal{O'}$ of $G' = G-\{v_2,v_3,\cdots, v_{t+1}\}$, where $\mathcal{O}'(e_1) = 1$:
\begin{align*}
\mathcal{O}(e) = \begin{cases}
1 & \text{ if } e = e_1 \\
2 & \text{ if } e = e_2\\
\mathcal{O}'(e)+1 & \text{ if } e \in G' \text{ and } 2 \leq \mathcal{O}'(e) \leq r-1 \\
r+1 & \text{ if } e = e_3\\
\mathcal{O}'(e)+2 & \text{ if } e \in G' \text{ and } r \leq \mathcal{O}'(e) \leq 2r-3 \\
2r & \text{ if } e = e_4\\
\mathcal{O}'(e)+3 & \text{ if } e \in G' \text{ and } 2r-2 \leq \mathcal{O}'(e) \leq 3r-5 \\
\ldots \\
(r-1)t+2 & \text{ if } e = e_{t+2}\\
\mathcal{O}'(e)+r & \text{ if } e \in G' \text{ and } (r-2)t+2 \leq \mathcal{O}'(e) \leq (r-2)(t+1)+1.
\end{cases}
\end{align*}

If a progression contains exactly $t$ of the newly added edges in $\{e_2, e_3, \ldots, e_{t+2}\}$, it forms a spanning tree. The only progression that doesn't have exactly $t$ of the newly added edges is $(e_2, \mathcal{O}^{-1}(3), \mathcal{O}^{-1}(4), \ldots, e_{t+2})$. It is like to replaces $e_1$ with $\{e_2, e_3, \ldots, e_{t+2}\}$ in the progression, and so $\mathcal{O}$ is a cyclic base ordering of $G$.
\end{proof}

\section{Broken wheel graphs}
A {\bf broken wheel graph} on $n\ge 4$ vertices is a graph obtained from a wheel graph $W_n$ by removing some of the spokes. In general, broken wheel graphs may not be uniformly dense, and thus may not have a CBO. Here we give a direct proof without using uniformly dense condition.

\subsection{General broken wheel graphs}

\begin{lemma}\label{bw:lb}
A graph with a cyclic base ordering must have no vertex with degree less than $\frac{|E|}{|V|-1}$.
\end{lemma}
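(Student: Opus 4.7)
The plan is to show that if $v$ is any vertex of degree $d$ in $G$, then $d \ge |E(G)|/(|V(G)|-1)$, by a gap/pigeonhole argument on the cyclic ordering restricted to the edges incident to $v$.

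First I would record the key structural consequence of a CBO: every progression, being a spanning tree, must contain at least one edge incident to $v$, because in a spanning tree every vertex has degree at least one. Next, fix a CBO $\mathcal{O}$ and list the cyclic positions $p_1 < p_2 < \cdots < p_d$ of the $d$ edges incident to $v$ (indices mod $|E(G)|$). These $d$ positions split the cyclic sequence of all $m=|E(G)|$ edges into $d$ arcs of lengths $g_i = p_{i+1}-p_i$ (with $p_{d+1}=p_1+m$), so that
\[
\sum_{i=1}^{d} g_i \;=\; m.
\]

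The crux is to argue that each $g_i \le n-1$, where $n=|V(G)|$. If some $g_i \ge n$, then the window of $n-1$ consecutive edges starting at position $p_i+1$ would lie strictly between $p_i$ and $p_{i+1}$ and therefore contain no edge incident to $v$; this progression could not induce a spanning tree, contradicting the CBO property. Thus every gap satisfies $g_i \le n-1$, and summing gives $m \le d(n-1)$, i.e., $d \ge m/(n-1)$, as required.

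I do not expect a real obstacle here: the argument is essentially pigeonhole on the cyclic gap sequence, and the only care needed is to set up the modular indexing for the $p_i$ and to verify that a window avoiding all $v$-incident edges really does exist whenever some $g_i \ge n$. Once that observation is made explicit, the bound is immediate.
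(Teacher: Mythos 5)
Your argument is correct and is essentially the same as the paper's: both rest on the observation that every progression is a spanning tree and hence must contain an edge at $v$, so the cyclic gaps between $v$-incident edges are at most $|V|-1$, and summing the $d$ gaps to $|E|$ gives $d \ge |E|/(|V|-1)$. Your write-up just makes the paper's informal ``spacing'' step explicit.
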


\begin{proof}
For each vertex in graph $G$, there must be at least one edge connected to it in order for the edges to form a spanning tree. Therefore, in a cyclic base ordering, each vertex's edges must have a cyclic spacing of at most $|V|-1$. Because there are a total of $|E|$ edges, the minimum number of edges a vertex must have to satisfy the required spacing is $\frac{|E|}{|V|-1}$.
\end{proof}

\begin{theorem}
If a broken wheel graph has a CBO, then the number of spokes must be at least $2$.
\end{theorem}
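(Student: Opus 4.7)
The plan is to apply Lemma~\ref{bw:lb} directly to the hub of the broken wheel. Let $G$ be a broken wheel on $n\ge 4$ vertices obtained from $W_n$ by retaining $r$ spokes. Then $|V(G)|=n$ and $|E(G)| = (n-1)+r$, since the rim cycle contributes $n-1$ edges and the retained spokes contribute the remaining $r$. First I would invoke Lemma~\ref{bw:lb} to conclude that every vertex of $G$ must have degree at least
$$\frac{|E(G)|}{|V(G)|-1} \;=\; \frac{n-1+r}{n-1} \;=\; 1+\frac{r}{n-1}.$$

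Next I would apply this bound at the hub $v$. Because every edge incident to $v$ is a spoke, the degree of $v$ is exactly $r$, so the inequality forces $r \ge 1 + r/(n-1)$. Clearing the denominator yields $r(n-2)\ge n-1$, that is,
$$r \;\ge\; \frac{n-1}{n-2} \;=\; 1+\frac{1}{n-2}.$$
Since $n\ge 4$, the right-hand side is strictly greater than $1$, and since $r$ is a non-negative integer, this forces $r\ge 2$, which is exactly the desired conclusion.

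The main obstacle here is really just bookkeeping: correctly tallying $|E(G)|$ and recognizing that the degree of the hub coincides with the number of remaining spokes, so that the minimum-degree inequality of Lemma~\ref{bw:lb} becomes a linear inequality in $r$. As a sanity check, one can note the degenerate small cases: $r=0$ leaves the hub isolated, so $G$ is disconnected and admits no spanning tree (and hence no CBO), while $r=1$ gives $|E|/(|V|-1) = n/(n-1) > 1$, and the hub's degree of $1$ already violates the lemma, consistent with the general derivation above.
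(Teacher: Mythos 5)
Your proposal is correct and follows essentially the same route as the paper: both apply Lemma~\ref{bw:lb} with $|E|=n+r-1$ at the hub, whose degree equals $r$, to obtain $r\ge\frac{n-1}{n-2}>1$ and hence $r\ge 2$. Your write-up is in fact slightly more explicit than the paper's about why the hub is the relevant vertex and why integrality finishes the argument.
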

\begin{proof}
Let $G$ be a broken wheel graph with $n$ vertices and $r$ remaining edges. For this graph, $|V| = n$ and $|E| = n+r-1$. From Lemma \ref{bw:lb}, we know that the minimum degree of any vertex must be at least $\frac{|E|}{|V|-1}$ to have a cyclic base ordering. In this case, the minimum degree of any vertex must be at least $\frac{n+r-1}{n-1}$. We can determine the minimum amount of inside edges required with $r \geq \frac{n+r-1}{n-1}$, or $r \geq \frac{n-1}{n-2}$. Since $\frac{n-1}{n-2}$ asymptotically approaches $1$ but never reaches it, in order for a broken wheel graph to have a cyclic base ordering, $r > 1$.
\end{proof}

% hmm my proof and theorem for this is kinda sketchy can yall double check this
\begin{theorem}\label{bw:cyclelb}
If a broken wheel graph $G$ has a CBO, then the smallest cycle of $G$ must have at least $\frac{n-1}{r} + 1$ vertices.
\end{theorem}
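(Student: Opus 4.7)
The plan is to take any cycle $C$ in the broken wheel graph $G$ and use the CBO structure to bound its length from below. The broken wheel on $n$ vertices with $r$ remaining spokes has $|E(G)| = (n-1)+r$ edges (the full rim plus the $r$ surviving spokes), so each progression is a spanning tree of size $n-1$ whose cyclic complement in the ordering is a window of exactly $r$ consecutive edges.

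The key observation is that a progression, being a spanning tree, is acyclic and therefore cannot contain all edges of $C$. Equivalently, every complementary $r$-edge window in the cyclic ordering must contain at least one edge of $C$. I would translate this into a gap condition: let $p_1,p_2,\ldots,p_c$ be the cyclic positions of the $c$ edges of $C$ in the edge ordering, and let $d_i$ be the cyclic distance from $p_i$ to $p_{i+1}$ (indices modulo $c$). The $c$ gaps satisfy $\sum_{i=1}^{c} d_i = |E(G)| = n+r-1$. If some $d_i \ge r+1$, then the $r$ positions strictly between $p_i$ and $p_{i+1}$ form an $r$-window disjoint from $C$, whose complementary progression would then contain all of $C$ — a spanning tree containing a cycle, a contradiction.

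Hence $d_i \le r$ for every $i$, and summing yields $cr \ge n+r-1$, i.e., $c \ge \frac{n-1}{r}+1$. Since $C$ is also a cycle on $c$ vertices, this lower bound holds for the number of vertices of every cycle of $G$, and in particular for the smallest. The only delicate point in the argument is the indexing in the gap step: one must verify that the $r$ positions strictly between $p_i$ and $p_{i+1}$ really do constitute the $r$-edge complement of some length-$(n-1)$ progression, which is immediate from the cyclic structure of the ordering but worth stating explicitly. Beyond that, the proof is a single counting inequality, and I do not foresee any substantial obstacle — the structural specifics of the broken wheel are not even used, only the edge count $|E(G)| = n+r-1$.
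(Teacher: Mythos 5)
Your proof is correct and follows essentially the same route as the paper: both arguments place the cycle edges in the cyclic ordering and derive $sr \ge n+r-1$ from the requirement that no $(n-1)$-window contain the whole cycle. Your version, which bounds every gap by $r$ and sums, is in fact a cleaner and fully rigorous rendering of the paper's averaging step, which instead appeals to the unproved (though true) claim that even spacing is the optimal distribution.
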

\begin{proof}
When a cycle exists in a graph that has a cyclic base ordering, the edges in the cycle must not all be within $n-1$ edges of each other cyclically. Because the optimal way to space edges out cyclically is by evenly distributing them, we can determine if this optimal spacing of edges is within $n-1$ edges of each other. We let $s$ be the number of vertices in the smallest cycle in the graph. The spacing between each edge in a cycle of size $s$ is $\frac{n+r-1}{s}$, and since we can have at most $s-1$ vertices in the spanning tree at any time, the largest spanning tree a cycle with $s-1$ vertices can be in is $\frac{s-1}{s}(n+r-1)$. Therefore, $n-1 \leq \frac{s-1}{s}(n+r-1)$, which can be rearranged to $s \geq \frac{n-1}{r}+1$.
\end{proof}

For example, a broken wheel graph with $n=11,r=3,s=4$ (Figure~\ref{fig:bw11}) cannot have a cyclic base ordering due to Theorem \ref{bw:cyclelb}. 
% todo: insert figure (car wheel type)
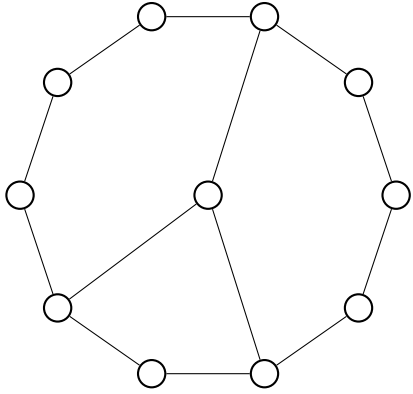
\begin{figure}[htb]
\centering{
\begin{tikzpicture}[every node/.style={circle,thick,draw}] 
\node (1) at (3.25, 0.125) {};
\node (2) at (1.75, 0.125) {};
\node (3) at (0.5, 1) {};
\node (4) at (0, 2.5) {};
\node (5) at (0.5, 4) {};
\node (6) at (1.75, 4.875) {};
\node (7) at (3.25, 4.875) {};
\node (8) at (4.5, 4) {};
\node (9) at (5, 2.5) {};
\node (10) at (4.5, 1) {};
\node (11) at (2.5, 2.5) {};
\begin{scope}[>={},every node/.style={fill=white,circle,inner sep=0pt,minimum size=12pt}]
\path [] (1) edge (2);
\path [] (2) edge (3);
\path [] (3) edge (4);
\path [] (4) edge (5);
\path [] (5) edge (6);
\path [] (6) edge (7);
\path [] (7) edge (8);
\path [] (8) edge (9);
\path [] (9) edge (10);
\path [] (10) edge (1);
\path [] (1) edge (11);
\path [] (3) edge (11);
\path [] (7) edge (11);
\end{scope}
\end{tikzpicture}
}
\caption{Broken Wheel with $n=11,r=3,s=4$}
\label{fig:bw11}
\end{figure}

However, we can show that some families of broken wheel graphs have CBOs in the next two subsections.

\subsection{Broken wheel graphs missing one spoke}
We consider a specific type of broken wheel graphs with exactly one spoke is removed. We will provide an explicit construction to show that such a graph has a CBO.

\begin{theorem}
Any broken wheel graph missing exactly one spoke has a CBO.
\end{theorem}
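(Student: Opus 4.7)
The plan is to exhibit an explicit cyclic ordering of $E(W_n - a_1)$ and verify directly that every progression of $n-1$ consecutive edges forms a spanning tree. Following the notation of Section 3.1, let $v_n$ be the hub, $v_1,\ldots,v_{n-1}$ the rim vertices in cyclic order, $a_i = v_iv_n$ the spokes, and $b_i = v_iv_{i+1}$ (with $b_{n-1} = v_{n-1}v_1$) the rim edges. By the rotational symmetry of $W_n$ we may assume without loss of generality that the missing spoke is $a_1$, so $v_1$ has degree $2$ and is incident only to $b_1$ and $b_{n-1}$.

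The key observation driving the construction is that every spanning tree of $W_n - a_1$ must contain at least one of $\{b_1, b_{n-1}\}$ to attach $v_1$; hence in any CBO, each cyclic window of $n-1$ consecutive edges must contain at least one of these two rim edges. With $2n-3$ edges in total, this forces $b_1$ and $b_{n-1}$ to sit at cyclic distance roughly $(2n-3)/2$, so that both cyclic arcs between them have length at most $n-2$. With that placement fixed, I would arrange the remaining $n-2$ spokes $a_2,\ldots,a_{n-1}$ and $n-3$ rim edges $b_2,\ldots,b_{n-2}$ in an alternating spoke-rim pattern adapted from the wheel CBO of Section 3.1, handling even and odd $n$ as separate cases in the spirit of the wheel graph proof. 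Because there is one more rim edge than spoke, the cyclic pattern contains a single ``adjacent rim pair'' position, which can be placed next to $b_1$ or $b_{n-1}$ to accommodate the distance constraint. The precise spoke offsets in each parity case can be pinned down by working out $n=4,5,6$ and extrapolating.

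Verification then proceeds progression-by-progression. Each progression decomposes into a star centered at $v_n$ (formed by the spokes it contains) together with one or two paths on the rim (formed by its rim edges), and constitutes a spanning tree iff these pieces span all $n$ vertices without creating a cycle. The main obstacle is the loss of full rotational symmetry: unlike the wheel CBO, this ordering is invariant only under reflection across the removed spoke, so multiple distinct starting positions must be verified individually. The most delicate case is the single progression that contains both $b_1$ and $b_{n-1}$, where $v_1$ has degree $2$ in the induced subgraph; the remaining $n-3$ edges must then form a two-component forest on $\{v_2,\ldots,v_n\}$ that separates $v_2$ from $v_{n-1}$, so that attaching $v_1$ via its two incident edges merges the components into a spanning tree. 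Engineering this condition while simultaneously avoiding the short cycles formed by the triples $\{a_i, a_{i+1}, b_i\}$, the $4$-cycle $\{b_1, a_2, a_{n-1}, b_{n-1}\}$, and the full rim cycle $\{b_1, \ldots, b_{n-1}\}$ inside every other progression is the core technical challenge.
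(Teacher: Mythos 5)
Your structural observations are all correct and match the skeleton of the paper's argument: the degree-$2$ vertex $v_1$ does force its two incident rim edges into nearly antipodal cyclic positions (the two gaps between them sum to $2n-5$ and each must be at most $n-2$, so they are $n-2$ and $n-3$), each progression does decompose into a star at the hub plus one or two rim paths, and the progression containing both edges at $v_1$ is indeed the delicate case. The problem is that what you have written is a plan for a proof rather than a proof. The two steps carrying all of the mathematical content are both deferred: you never actually write down the ordering (``the precise spoke offsets\ldots\ can be pinned down by working out $n=4,5,6$ and extrapolating'' is exactly the construction the theorem demands, and extrapolating from three small cases is not a proof that a consistent general pattern exists), and the verification is only described, not performed. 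In particular, you correctly isolate the hardest requirement --- that in the unique progression containing both $b_1$ and $b_{n-1}$, the other $n-3$ edges must form a two-component forest separating $v_2$ from $v_{n-1}$ --- but you give no argument that any ordering satisfying your distance constraint also satisfies this, and it is not automatic.

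To close the gap you need what the paper supplies: an explicit formula for the ordering in each parity class --- the paper interleaves rim edges with spokes whose feet are offset by roughly $n/2$ positions around the rim, placing the two rim edges at $v_1$ in the forced near-antipodal slots, with separate formulas for $n$ even and $n$ odd --- followed by an explicit identification, for each of the several inequivalent starting positions (the ordering has only a reflection symmetry, as you note), of the progression's edge set as a star at the hub together with rim paths meeting the star and each other in single vertices. Each such check is short but none is optional. As it stands, the proposal identifies the right constraints and the right architecture but establishes neither that an ordering meeting all the constraints simultaneously exists nor that any particular ordering is a CBO.
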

\begin{proof}
Suppose that the universal vertex is $v_0$ and other vertices are $v_1,v_2,\ldots, v_{n-1}$ in the order on the rim. Without loss of generality, we may assume that the spoke $v_0v_1$ is removed.

For an even $n$, i.e., $n=2k+2$ for some positive integer $k$, we define an ordering $\mathcal{O}_{2k+2} = (v_1 v_2, v_0 v_2, v_{k+2} v_{k+3}, v_0 v_3, v_{k+3} v_{k+4}, \ldots, v_0 v_{k+1}, v_{2k + 1} v_1, v_0 v_{k+2}, v_2 v_3, v_0 v_{k+3}, v_3 v_4,
\ldots, v_0 v_{2k + 1}, v_{k+1} v_{k+2})$. Here is an illustration for $n = 2k + 2 = 8$ in Figure~\ref{fig:bwg}.
\begin{figure}[htb]
\centering
\begin{tikzpicture}[every node/.style={circle,thick,draw}] 
\node (1) at (2.5, 0) {$v_1$};
\node (2) at (4.5, 1) {$v_2$};
\node (3) at (5, 3) {$v_3$};
\node (4) at (3.5, 5) {$v_4$};
\node (5) at (1.5, 5) {$v_5$};
\node (6) at (0, 3) {$v_6$};
\node (7) at (0.5, 1) {$v_7$};
\node (8) at (2.5, 2.7) {$v_0$};
\begin{scope}[>={},every node/.style={fill=white,circle,inner sep=0pt,minimum size=12pt}]
\path [] (2) edge node {2} (8);
\path [] (3) edge node {4} (8);
\path [] (4) edge node {6} (8);
\path [] (5) edge node {8} (8);
\path [] (6) edge node {10} (8);
\path [] (7) edge node {12} (8);

\path [] (1) edge node {1} (2);
\path [] (3) edge node {11} (4);
\path [] (2) edge node {9} (3);
\path [] (5) edge node {3} (6);
\path [] (4) edge node {13} (5);
\path [] (7) edge node {7} (1);
\path [] (6) edge node {5} (7);
\end{scope}
\end{tikzpicture}
\caption{CBO of a broken wheel graph on 8 vertices missing 1 spoke}
\label{fig:bwg}
\end{figure}
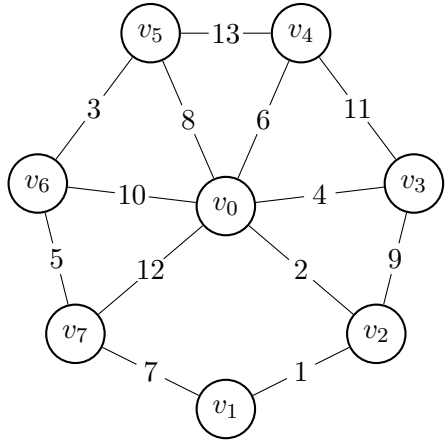

Suppose there are two sets of edges, $a$ and $b$. For $i=1,2,\ldots, n$, the edge $a_i$ joins $v_i$ to $v_{i+1}$, and $v_{n} = v_1$ for the sake of this definition. For $i=1,2,\ldots, n-2$, the edge $b_i$ joins $v_0$ and $v_{i+1}$.

Then, in general, the proposed CBO starts with $a_1, b_1$, and then the pattern that follows, or the first half of the CBO, is $a_{\tfrac{n}{2} + 1}, b_2, a_{\tfrac{n}{2} + 2}, b_3, \ldots, a_{n-2}, b_{\tfrac{n}{2} - 1}, a_{n-1}$. The second half immediately follows, and has the pattern $b_{\tfrac{n}{2}}, a_{2}, b_{\tfrac{n}{2} + 1}, a_{3}, \ldots, b_{\tfrac{n}{2}}, a_{\tfrac{n}{2}}$.

We will do casework on which edge is the first edge and prove that there is always a cycle in every cyclically consecutive $n-1$ edges. 

If the first edge is $a_{i}$ for some $i, 1 < i \leq \dfrac{n}{2}$, then the sequence of edges is 

$a_i, b_{\tfrac{n}{2} + i - 1}, a_{i+1}, b_{\tfrac{n}{2} + i}, \ldots, a_{\tfrac{n}{2} - 1}, b_{n-2}, a_{\tfrac{n}{2}} a_1, b_1$, and then $a_{\tfrac{n}{2} + 1}, b_2, a_{\tfrac{n}{2} + 2}, b_3, \ldots, a_{\tfrac{n}{2} + i - 2}, b_{i-1}$.

The first and second halves of the $a$ edges connects vertices from $v_i$ to  $v_{\tfrac{n}{2} + i - 1}$ (a connected component), as well as $v_1$ to $v_2$. 

The two halves of $b$ edges connects all the vertices from $v_2$ to $v_i$ to $v_0$ and all the vertices from $v_{\tfrac{n}{2} + i}$ to $v_{n - 1}$ (making all these vertices in a connected component).

Thus, the two connected components from the $a$ and $b$ edges are joined at $v_i$, connecting all the edges from $v_2$ to $v_{n-1}$ in a single component, and finally, $v_1$ is connected to $v_2$, making a spanning tree.

To get the corresponding sequences starting with a $b$ edge, we can shift it so that $b_{\tfrac{n}{2} + i - 1}$ is the first edge. Then, the extra edge at the end would be $a_{\tfrac{n}{2} + i - 1}$. The $b$ edges still connect all the vertices from $v_2$ to $v_i$ to $v_0$ and all the vertices from $v_{\tfrac{n}{2} + i}$ to $v_{n - 1}$. However, the $a$ edges now no longer connect $v_i$ among them but instead connect $v_{\tfrac{n}{2} + i}$. $v_{\tfrac{n}{2} + i}$ then becomes the new joining point for the cycles, so a spanning tree is still formed.

Similarly, if the first edge is $a_{\tfrac{n}{2} + i}$, where $1 \leq i < \dfrac{n}{2}$, the edge sequence would be 

$a_{\tfrac{n}{2} + i}, b_{i + 1}, a_{\tfrac{n}{2} + i + 1}, b_{i + 2}, \ldots, a_{n - 1}, b_{\tfrac{n}{2}}$, and then $a_2, b_{\tfrac{n}{2} + 1}, a_3, b_{\tfrac{n}{2} + 2}, \ldots, a_i,  b_{\tfrac{n}{2} + i - 1}, a_{i+1}$.

Then, from the $a$ set of edges, all the edges from $v_{\tfrac{n}{2} + i}$ to $v_n$ (or just $v_1$) would be in a connected component, and the vertices from $v_2$ to $v_{i + 2}$ would be in another connected component. 

From the $b$ set of edges, all vertices from $v_{i+2}$ to $v_{\tfrac{n}{2} + i}$ would be in a connected component (along with $v_0$). The three components together contain all the vertices, and they are joined together by the common vertices $v_{i+2}$ and $v_{\tfrac{n}{2} + i}$, so a spanning tree is formed.

To get the corresponding sequences starting with a $b$ edge, we can shift it so that $b_{i+1}$ is the first edge. Then, the new edge at the end would be $b_{\tfrac{n}{2} + i}$. Since $a_{\tfrac{n}{2} + i}$ is gone, $v_{\tfrac{n}{2} + i}$ would no longer be connected by the $a$ edges, so it is no longer a joining point for the separate components. However, the addition of $b_{\tfrac{n}{2} + i}$ connects $v_{\tfrac{n}{2} + i + 1}$ to the $b$ edge connected component, so $v_{\tfrac{n}{2} + i + 1}$ becomes the new joining point and a spanning tree is still formed.

If the first edge is $a_1$, then the sequence is $a_1, b_1, a{\tfrac{n}{2} + 1}, b_2, a{\tfrac{n}{2} + 2}, b_3, \ldots, b_{\tfrac{n}{2} - 1}, a_{n-1}$. The $a$ edges connect the vertices from $v_{\tfrac{n}{2} + 1}$ to $v_n$ (or just $v_1$) and $v_1$ to $v_2$, and the $b$ edges connect all the vertices from $v_2$ to $v_{\tfrac{n}{2}}$ to $v_0$, so together a spanning tree is still formed.

\bigskip
For an odd $n$, i.e., $n=2k+1$ for some positive integer $k\ge 2$, we define an ordering $\mathcal{O}_{2k+1} = (a_{\tfrac{n+1}{2}}, b_1, a_{\tfrac{n+3}{2}}, b_2, \ldots, a_{n-1}, b_{\tfrac{n-1}{2}}, b_{\tfrac{n+1}{2}}, a_2, b_{\tfrac{n+3}{2}}, a_3,\ldots, b_{n-2}, a_{\tfrac{n-1}{2}}, a_1)$, where the definitions of $a_i$ and $b_i$ is the same as above.
%\textcolor{red}{Could you please add an example/figure here, similar to the above?}

Similar to the even $n$ case, the CBO has 2 "halves", the first half ending at $b_{\tfrac{n-1}{2}}$, and the second ending at $a_{\tfrac{n-1}{2}}$ with $a_1$ being the last extra edge.

The first case is when the cyclic edges starts from $a_{\tfrac{n+1}{2} + k}$, where $0 \leq k < \tfrac{n-1}{2}$. In this case, the edges would be  $(a_{\tfrac{n+1}{2} + k}, b_{k+1}, a_{\tfrac{n+1}{2} + k + 1}, b_{k+2}, \ldots a_{n-1}, b_{\tfrac{n-1}{2}}, b_{\tfrac{n+1}{2}}, a_2, b_{\tfrac{n+3}{2}}, a_3, \ldots, b_{\tfrac{n-1}{2} + k}, a_{k+1})$

The $a$ edges join $v_2, v_3, \ldots, v_{k+2}$ as well as $v_{\tfrac{n+1}{2} + k}, v_{\tfrac{n+1}{2} + k + 1}, \ldots, v_{n-1}, v_n$
($v_n$ is just $v_1$ here) in 2 connected components, while the $b$ edges join $v_{k+2}, v_{k+3}, \ldots v_{\tfrac{n+1}{2}}$ and $v_{\tfrac{n+1}{2} + 1}, v_{\tfrac{n+1}{2} + 2}, \ldots v_{\tfrac{n+1}{2} + k}$ (all in a single component connected to $v_0$). $v_{\tfrac{n+1}{2} + k}$ and $v_{k+2}$ act as the 2 joining points for the 3 connected components.

The corresponding sequences of cyclic edges starting from a $b$ edge would exclude $a_{\tfrac{n+1}{2} + k}$ but include $b_{\tfrac{n+1}{2} + k}$. All this would do would be to change the 2 $v_{\tfrac{n+1}{2} + k}$ vertices at the end of the 2nd two connected components to $v_{\tfrac{n+1}{2} + k + 1}$, making $v_{\tfrac{n+1}{2} + k + 1}$ the new joining point for the components instead of $v_{\tfrac{n+1}{2} + k}$. This means that a spanning tree is still formed.

Another possible starting point is at $b_{\tfrac{n+1}{2} + k}$, where $0 \leq k < \tfrac{n-3}{2}$. In this case, the cyclic edges $b_{\tfrac{n+1}{2} +k}, a_{k+2}, b_{\tfrac{n+1}{2} + k + 1}, a_{k+3}, \ldots, b_{n-2}, a_{\tfrac{n-1}{2}}, a_1, a_{\tfrac{n+1}{2}}, b_1, a_{\tfrac{n+1}{2} + 1}, b_2, \ldots, a_{\tfrac{n-1}{2} + k}, b_k, a_{\tfrac{n+1}{2} + k}$.

The $a$ edges join $v_{k+2}, v_{k+3}, \ldots, v_{\tfrac{n+1}{2} + k + 1}$ as well as $v_1$ to $v_2$, and the $b$ edges join $v_{\tfrac{n+1}{2} + k + 1}$, $v_{\tfrac{n+1}{2} + k + 2}, \ldots, v_{n-1}$ as well as $v_2, v_3, \ldots, v_k, v_{k+1}$ (all joined to $v_0$). This covers all the vertices in the graph, so a spanning tree is formed once again. 

The corresponding cyclically consecutive vertices which start from $a$ instead are shifted over by 1 - they start from $a_{k+2}$ instead. In this case, $v_{\tfrac{n+1}{2} + k + 1}$ is no longer the "joiner" since the $b$ edges do not contain it, but $b_{k+1}$ is the extra edge added, so $v_{k+2}$ is now joined by the $b$ edges, making $v_{k+2}$ the new "joiner", so a CBO is once again formed.
\end{proof}

\subsection{Broken wheel graphs $W(n, r)$}
Suppose that the universal vertex is $v_0$ and vertices on the rim are $v_1,v_2,\ldots, v_{n-1}$. For $n-1\ge r\ge 2$, let $W(n, r)$ be a broken wheel graph on $n$ vertices and $r$ spokes such that the $r$ spokes are $v_0v_k$, where $k=\lfloor(n-1)i/r\rfloor +1$ for all integers $0\le i < r$. In other words, the spokes in $W(n, r)$ are uniformly distributed as much as possible. It has been proved in \cite{CaGH88} that $W(n, r)$ is uniformly dense for $n-1\ge r\ge 2$.

The broken wheel graphs missing one spoke we discussed in the last subsection actually is $W(n,n-2)$. We can generalize it and prove the following theorem.
\begin{theorem}
For $n-1 \geq r \geq 2$ and $n=rt+2$, $W(n,r)$ has a cyclic base ordering.
\end{theorem}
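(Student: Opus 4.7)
The plan is to prove this by induction on $r$, adapting the insertion argument from the proof of Theorem~\ref{brokenfan:uniform}.

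For the base case $r=2$, the graph $W(2t+2, 2)$ consists of a rim cycle of length $2t+1$ together with two spokes from the hub $v_0$ to the vertices $v_1$ and $v_{t+1}$. Labelling the spokes $a_1 = v_0 v_1$, $a_2 = v_0 v_{t+1}$ and the rim edges $b_i = v_i v_{i+1}$ (indices modulo $2t+1$), I would exhibit the explicit CBO
$$\mathcal{O}_2 = (a_1,\, b_1,\, b_{t+1},\, b_2,\, b_{t+2},\, \ldots,\, b_t,\, b_{2t},\, a_2,\, b_{2t+1}),$$
which alternates one edge of the ``short arc'' $b_1, \ldots, b_t$ (of length $t$) with one edge of the ``long arc'' $b_{t+1}, \ldots, b_{2t+1}$ (of length $t+1$), using the two spokes as separators. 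Verification that every $2t+1$ cyclically consecutive edges form a spanning tree is a finite case analysis on where the progression begins, in the same spirit as the case analyses in earlier sections of the paper.

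For the inductive step, assume $G' = W((r-1)t+2, r-1)$ has a CBO $\mathcal{O}'$. The graph $G = W(rt+2, r)$ can be obtained from $G'$ by removing a single rim edge $f = v_1 v_{t+2}$ (in $G$'s labeling) and inserting a new arc consisting of $t$ new rim vertices $v_2, \ldots, v_{t+1}$, the $t+1$ new rim edges $e_j = v_{j-1} v_j$ for $j = 2, \ldots, t+2$, and one new spoke $s = v_0 v_{t+1}$ attached to the middle new vertex. Rotate $\mathcal{O}'$ so that $\mathcal{O}'(f) = 1$; then construct $\mathcal{O}$ for $G$ by deleting $f$ and inserting the $t+2$ new edges at specifically chosen positions, modeled on the pattern of Theorem~\ref{brokenfan:uniform}: space the rim-path edges $e_2, \ldots, e_{t+2}$ roughly $r-1$ positions apart through the cyclic ordering, and insert the spoke $s$ at an appropriate interior position of this block.

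The verification proceeds by case analysis on how many of the new edges each progression of $\mathcal{O}$ contains. In the main case a progression contains $t$ of the rim-path edges together with the spoke $s$, so its $(r-1)t+1$ non-new edges form a full progression of $\mathcal{O}'$, hence a spanning tree of $G'$; subdividing the edge $f$ of $G'$ by the rim path $v_1 v_2 \ldots v_{t+2}$ and attaching the spoke $s$ extends this to a spanning tree of $G$. In the exceptional progressions (those containing all $t+1$ rim-path edges, or an unbalanced selection including or excluding $s$), a contraction-style argument is used: contracting the new rim path back to the single edge $f$ turns the exceptional progression into one associated with $\mathcal{O}'$, which is a spanning tree of $G'$, and this un-contracts to a spanning tree of $G$.

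The main obstacle is the placement of the new spoke $s$, since unlike the broken-fan case (where the new spoke is attached at an endpoint of the new path), here $s$ attaches to the interior vertex $v_{t+1}$ of the new arc. The position of $s$ in $\mathcal{O}$ therefore has to be chosen so that every progression either (i) excludes $s$ and can be analysed by the broken-fan-style argument applied to the rim-path edges alone, or (ii) includes $s$ together with enough of the rim path that no short cycle is created with the old spokes and no intermediate rim vertex is left isolated. Pinning down the correct position of $s$, and then executing the corresponding case analysis, will be the most delicate part of the proof.
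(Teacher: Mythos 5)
Your overall strategy differs from the paper's: you induct on $r$ within the wheel family, growing $W(rt+2,r)$ from $W((r-1)t+2,r-1)$ by splicing in a new arc \emph{plus a new spoke}, whereas the paper deletes the $t$ spoke-free rim vertices of the unique long gap all at once, observes that what remains is exactly the broken fan $F_{n-t}(t)$ (already handled in Theorem~\ref{brokenfan:uniform}), and then only has to re-insert the $t+1$ new rim edges -- no new spoke -- spread through the ordering and anchored at the minimal-index edge $p$ of the tree path between the two attachment vertices $x$ and $y$. That reduction is precisely what sidesteps the difficulty you yourself identify as ``the most delicate part'': in the paper's construction every inserted edge lies on a single path replacing $p$, so the exceptional progression is handled by the same replacement argument as in Theorem~\ref{brokenfan:uniform}. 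Your base case $r=2$ is fine (the theta-graph check that no two cyclically consecutive edges lie in the same branch works out), but the inductive step has a genuine gap, and not only the admitted one about where to put $s$.

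Concretely, the arithmetic in your ``main case'' is off by one, and the discrepancy is exactly where the spoke causes trouble. A progression of $G=W(rt+2,r)$ has $rt+1$ edges, while a progression of $\mathcal{O}'$ in $G'=W((r-1)t+2,r-1)$ has $(r-1)t+1$ edges; the difference is $t$, so a progression of $G$ whose old edges form a \emph{full} progression of $\mathcal{O}'$ must contain exactly $t$ new edges, not the $t+1$ (namely $t$ rim-path edges together with $s$) that you assert. With $t+1$ new edges the remaining $(r-1)t$ old edges are a progression of $\mathcal{O}'$ minus one edge, i.e.\ a two-component forest of $G'$, and your claimed reduction does not apply. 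If instead you arrange exactly $t$ new edges per progression, then whenever $s$ occupies one of those $t$ slots only $t-1$ of the $t+1$ rim-path edges are present, and since each interior new vertex other than the one carrying $s$ is incident only to its two rim-path edges, an adjacent pair of omitted rim-path edges isolates such a vertex. Ruling this out is not a routine case analysis but the crux of the argument, and it is exactly what your proposal leaves unresolved; as written, the inductive step is not a proof.
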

\begin{proof}
Let $G$ be the graph $W(n, r)$ where $n-1 \geq r \geq 2$ and $n=rt+2$. It can be seen that there are $r-1$ pairs of spokes with a gap of $t$ edges and $t-1$ vertices in between them. Additionally, there is one pair of spokes with a gap of $t+1$ edges and $t$ vertices. We denote the $t$ vertices as $S = (v_1, v_2, \ldots, v_t)$. Let $G'$ be $G - S$. $G'$ is the graph $F_{n-t}(t)$, and because $n-t=(r-1)t+2$, by Theorem \ref{brokenfan:uniform}, $G'$ has a cyclic base ordering which we call $\mathcal{O}'$. Let $x$ and $y$ be the two vertices with degree 2 incident to the universal vertex in $G'$. In other words, they are the outside edges of the rim in $G'$. Let $H$ be the unique direct path from $x$ to $y$ in the spanning tree of $G'$ formed by edges $(\mathcal{O}'^{-1}(1), \mathcal{O}'^{-1}(2), \ldots, \mathcal{O}'^{-1}(n-2))$. Let $p$ be the edge with the smallest $\mathcal{O}'(p)$ value in $H$. In other words, $p=\mathcal{O}'^{-1}(\min_{x \in H} \mathcal{O}'(x))$. It is easy to see that when edge $p$ is removed, vertices $x$ and $y$ will be in different components. Without loss of generality, let $\mathcal{O}'(p)=1$. Let the cyclic base ordering of $G$ be the following:
\begin{align}
\mathcal{O}(e) = \begin{cases}
1 & \text{ if } e = p \\
2 & \text{ if } e = v_1x\\
\mathcal{O}'(e)+1 & \text{ if } e \in G' \text{ and } 2 \leq \mathcal{O}'(e) \leq r \\
r+2 & \text{ if } e = v_1v_2\\
\mathcal{O}'(e)+2 & \text{ if } e \in G' \text{ and } r+1 \leq \mathcal{O}'(e) \leq 2r-1 \\
\ldots \\
rt+2 & \text{ if } e = v_ty\\
\mathcal{O}'(e)+r+1 & \text{ if } e \in G' \text{ and } (r-1)t + 2 \leq \mathcal{O}'(e) \leq (r-1)(t+1)+1
\end{cases}
\end{align}

\begin{figure}[htb]
\centering
\begin{tikzpicture}[every node/.style={circle,thick,draw}] 
\node (1) at (2.5, 0) {};
\node (2) at (4.5, 1) {};
\node (3) at (5, 3) {$y$};
\node (4) at (3.5, 5) {$v_2$};
\node (5) at (1.5, 5) {$v_1$};
\node (6) at (0, 3) {$x$};
\node (7) at (0.5, 1) {};
\node (8) at (2.5, 2.7) {};
\begin{scope}[>={},every node/.style={fill=white,circle,inner sep=0pt,minimum size=12pt}]
\path [] (3) edge (8);
\path [] (6) edge (8);
\path [] (1) edge (8);

\path [] (1) edge (2);
\path [] (3) edge (4);
\path [] (2) edge (3);
\path [] (5) edge (6);
\path [] (4) edge (5);
\path [] (7) edge (1);
\path [] (6) edge (7);
\end{scope}
\end{tikzpicture}
\caption{$W(8, 3)$}
%\label{}
\end{figure}
\end{proof}

\section{Prism graphs}
A {\bf prism graph} $Y_n$ is a graph that has a $n$-gonal prism as its skeleton. Clearly $Y_n$ has $2n$ vertices and $3n$ edges. Equivalently, a prism graph $Y_n$ can be constructed as the Cartesian product of the cycle $C_n$ and a single edge $K_2$. Some cyclic base orderings of $Y_3$ and $Y_5$ are shown in Figure~\ref{fig:y3}.
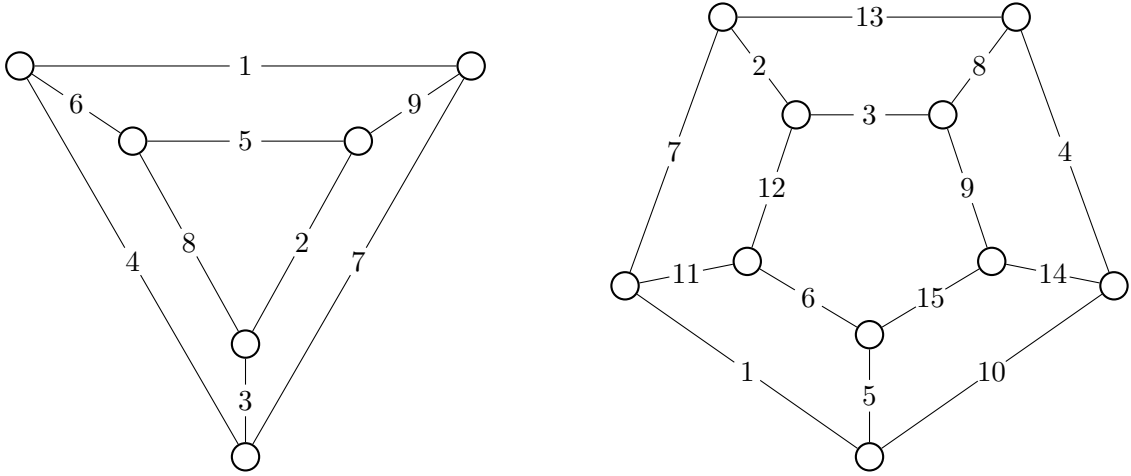
\begin{figure}[htb]
\begin{subfigure}
%\begin{minipage}[]{0.5\linewidth}
\centering
\begin{tikzpicture}[every node/.style={circle,thick,draw}] 
\node (1) at (5, 0) {};
\node (2) at (2, 5.2) {};
\node (3) at (8, 5.2) {};
\node (4) at (5, 1.5) {};
\node (5) at (3.5, 4.2) {};
\node (6) at (6.5, 4.2) {};
\begin{scope}[>={},every node/.style={fill=white,circle,inner sep=0pt,minimum size=12pt}]
\path [] (1) edge node {4} (2);
\path [] (2) edge node {1} (3);
\path [] (3) edge node {7} (1);
\path [] (4) edge node {8} (5);
\path [] (5) edge node {5} (6);
\path [] (6) edge node {2} (4);
\path [] (1) edge node {3} (4);
\path [] (2) edge node {6} (5);
\path [] (3) edge node {9} (6);
\end{scope}
\end{tikzpicture}
%\caption{}
\end{subfigure}
\hspace{0.5in}
%\end{minipage}
%\begin{minipage}[]{0.5\linewidth}
\begin{subfigure}
\centering
\begin{tikzpicture}[every node/.style={circle,thick,draw}, scale=0.65] 
\node (1) at (5, 0){};
\node (2) at (0, 3.5) {};
\node (3) at (2, 9) {};
\node (4) at (8, 9) {};
\node (5) at (10, 3.5) {};
\node (6) at (5, 2.5) {};
\node (7) at (2.5, 4) {};
\node (8) at (3.5, 7) {};
\node (9) at (6.5, 7) {};
\node (10) at (7.5, 4) {};
\begin{scope}[>={},every node/.style={fill=white,circle,inner sep=0pt,minimum size=12pt}]
\path [] (1) edge node {1} (2);
\path [] (10) edge node {15} (6);
\path [] (4) edge node {8} (9);
\path [] (2) edge node {7} (3);
\path [] (10) edge node {9} (9);
\path [] (5) edge node {14} (10);
\path [] (3) edge node {13} (4);
\path [] (8) edge node {3} (9);
\path [] (1) edge node {5} (6);
\path [] (4) edge node {4} (5);
\path [] (6) edge node {6} (7);
\path [] (2) edge node {11} (7);
\path [] (1) edge node {10} (5);
\path [] (7) edge node {12} (8);
\path [] (3) edge node {2} (8);
\end{scope}
\end{tikzpicture}

\iffalse%%%%%%%
\begin{tikzpicture}[every node/.style={circle,thick,draw}] 
\node (1) at (5, 0) {};
\node (2) at (2, 5.2) {};
\node (3) at (8, 5.2) {};
\node (4) at (5, 1.5) {};
\node (5) at (3.5, 4.2) {};
\node (6) at (6.5, 4.2) {};
\begin{scope}[>={},every node/.style={fill=white,circle,inner sep=0pt,minimum size=12pt}]
\path [] (2) edge node {1} (3);
\path [] (5) edge node {2} (6);
\path [] (4) edge node {3} (1);
\path [] (2) edge node {4} (1);
\path [] (5) edge node {5} (4);
\path [] (6) edge node {6} (3);
\path [] (1) edge node {7} (3);
\path [] (4) edge node {8} (6);
\path [] (2) edge node {9} (5);
\end{scope}
\end{tikzpicture}
\fi%%%%%%%%%
%\caption{}
\end{subfigure}
%\end{minipage}
\caption{$Y_3$ and $Y_5$}
\label{fig:y3}
\end{figure}

\iffalse%%%%%%%%
Here is a CBO for $Y_5$ in Figure~\ref{fig:y5}.
\begin{figure}[htbp]
\begin{center}
\begin{tikzpicture}[every node/.style={circle,thick,draw}, scale=0.8] 
\node (1) at (5, 0) {$v_1$};
\node (2) at (0, 3.5) {$v_2$};
\node (3) at (2, 9) {$v_3$};
\node (4) at (8, 9) {$v_4$};
\node (5) at (10, 3.5) {$v_5$};
\node (6) at (5, 2.5) {$u_1$};
\node (7) at (2.5, 4) {$u_2$};
\node (8) at (3.5, 7) {$u_3$};
\node (9) at (6.5, 7) {$u_4$};
\node (10) at (7.5, 4) {$u_5$};
\begin{scope}[>={},every node/.style={fill=white,circle,inner sep=0pt,minimum size=12pt}]
\path [] (1) edge node {1} (2);
\path [] (10) edge node {15} (6);
\path [] (4) edge node {8} (9);
\path [] (2) edge node {7} (3);
\path [] (10) edge node {9} (9);
\path [] (5) edge node {14} (10);
\path [] (3) edge node {13} (4);
\path [] (8) edge node {3} (9);
\path [] (1) edge node {5} (6);
\path [] (4) edge node {4} (5);
\path [] (6) edge node {6} (7);
\path [] (2) edge node {11} (7);
\path [] (1) edge node {10} (5);
\path [] (7) edge node {12} (8);
\path [] (3) edge node {2} (8);
\end{scope}
\end{tikzpicture}
\end{center}
\caption{CBO of $Y_5$}
\label{fig:y5}
\end{figure}
\fi

\begin{theorem}
If $n=3m+2$ for some positive integer $m$, then $Y_n$ has a CBO.
\end{theorem}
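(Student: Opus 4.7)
The plan is to construct an explicit cyclic base ordering for $Y_n$ by interleaving outer-cycle edges, rungs, and inner-cycle edges in a strict period-$3$ pattern. Label the outer cycle vertices $v_1, \ldots, v_n$, the inner cycle vertices $u_1, \ldots, u_n$, with rung $v_i u_i$ for each $i$, and define, for $k = 0, 1, \ldots, n-1$,
\begin{align*}
\mathcal{O}^{-1}(3k+1) &= v_{1+3k}\, v_{2+3k}, \\
\mathcal{O}^{-1}(3k+2) &= v_{3+3k}\, u_{3+3k}, \\
\mathcal{O}^{-1}(3k+3) &= u_{3+3k}\, u_{4+3k},
\end{align*}
with all vertex indices taken modulo $n$. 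Since $n = 3m+2$ is coprime to $3$, the indices $1+3k$ and $3+3k$ each range over all residues modulo $n$, so this is a genuine bijection onto $E(Y_n)$. For $n=5$ this reproduces the $Y_5$ ordering already exhibited earlier in the paper.

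Next I would exploit cyclic symmetry to cut down the verification. Shifting the starting position of a progression by $3$ corresponds to shifting every vertex index in it by $3$, and the map $v_i \mapsto v_{i+3}$, $u_i \mapsto u_{i+3}$ is an automorphism of $Y_n$, so it is enough to check the three progressions starting at positions $1$, $2$, and $3$. Each progression has length $2n-1 = 3(2m+1)$, and the period-$3$ edge-type structure of $\mathcal{O}$ forces each to contain exactly $2m+1$ outer edges, $2m+1$ rungs, and $2m+1$ inner edges.

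For each starting position I would list the $m+1$ missing outer edges and the $m+1$ missing inner edges. Removing them splits the outer cycle into $m+1$ components ($m$ paths on three vertices plus one edge on two vertices, with the precise shape depending on the starting position and wraparound) and similarly splits the inner cycle. I would then view the progression as a ``super-graph'' whose $2m+2$ super-vertices are these outer and inner path-components and whose $2m+1$ super-edges are the $2m+1$ rungs present. For the progression starting at position $1$, for example, the outer components are $O_j = v_{3j-2}v_{3j-1}v_{3j}$ for $j=1,\ldots,m$ together with $O_{m+1}=v_{3m+1}v_{3m+2}$, the inner components are $I_0 = u_1u_2$ together with $I_j = u_{3j}u_{3j+1}u_{3j+2}$ for $j=1,\ldots,m$, and the rungs $v_iu_i$ with $i\in\{1,3,4,6,7,\ldots,3m,3m+1\}$ can be read off to realize the super-graph as exactly the super-path
$$I_0 - O_1 - I_1 - O_2 - I_2 - \cdots - O_m - I_m - O_{m+1}.$$
Since each super-vertex is itself a path and the super-graph is a tree with the right super-edge count, the progression induces a spanning tree of $Y_n$. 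Directly analogous super-path arguments handle the progressions starting at positions $2$ and $3$.

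The main obstacle I expect is the bookkeeping around wraparound: the three starting positions produce slightly different decompositions of the outer and inner cycles (the short path and the wraparound path shift as the starting position moves through positions $1,2,3$), so one has to confirm case by case that the rung-to-component incidences really line up into a single super-path rather than leaving a cycle or a disconnected super-graph. The coprimality $\gcd(3,n)=1$ together with the explicit index formulas makes this a finite combinatorial check, but executing it cleanly for all three progression types is where the bulk of the work lies.
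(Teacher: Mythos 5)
Your proposal is correct and follows essentially the same route as the paper: you construct the identical ordering (triples $a_{3k+1}, b_{3k+3}, c_{3k+3}$), reduce to three progressions by the rotation-by-$3$ symmetry, and verify each by decomposing the outer and inner cycles into $m+1$ short paths linked by the $2m+1$ rungs into a spanning tree. Your ``super-path'' framing is just a tidier packaging of the paper's connectivity argument, and the paper handles the second and third progressions by a one-edge exchange rather than redoing the decomposition, but these are cosmetic differences.
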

\begin{proof}
Any prism graph $Y_n$ has an outer cycle $C_n$, an inner cycle $C_n$, and edges between the outer cycle and the inner cycle. Suppose that the vertices of the outer cycles are $v_1, v_2,\ldots, v_n$, while the vertices of the inner cycles are $u_1,u_2,\ldots, u_n$.

Now, define 3 sets of edges, $a$, $b$, $c$. For every integer $i$ such that $1 \leq i \leq n$, define $a_i$ to be the edge $v_i v_{i+1}$, $b_i$ as the edge $v_i u_i$, and $c_i$ as the edge $u_i u_{i+1}$. When the subscript $k > n$ or $k \leq 0$, $v_k, a_k, b_k, c_k$ are the same as $v_r, a_r, b_r, c_r$ respectively, where $r$ is the (unique) integer such that $1 \leq r \leq n$ and $r \equiv k \pmod{n}$.

Define an edge ordering $\mathcal{O}$ of $Y_n$ by
$$\mathcal{O} = (a_1, b_{3}, c_{3}, a_{4}, b_{6}, c_{6}, a_{7}, b_{9}, c_{9},\ldots ,a_{3n-2}, b_{3n}, c_{3n}).$$
This ordering can be considered as a list of $n$ ordered triples $(a_{3i+1}, b_{3i+3}, c_{3i+3})$ for $i=0,1,\ldots, n-1$. Here is an example for $Y_8$ in Figure~\ref{fig:y8}.

\begin{figure}[htb]
\begin{center}
\begin{tikzpicture}[every node/.style={circle,thick,draw}, scale=0.8] 
\node (1) at (5, 0) {$v_1$};
\node (2) at (1.5, 1.5) {$v_2$};
\node (3) at (0, 5) {$v_3$};
\node (4) at (1.5, 8.5) {$v_4$};
\node (5) at (5, 10) {$v_5$};
\node (6) at (8.5, 8.5) {$v_6$};
\node (7) at (10, 5) {$v_7$};
\node (8) at (8.5, 1.5) {$v_8$};
\node (9) at (5, 2) {$u_1$};
\node (10) at (2.8, 2.8) {$u_2$};
\node (11) at (2, 5) {$u_3$};
\node (12) at (2.8, 7.2) {$u_4$};
\node (13) at (5, 8) {$u_5$};
\node (14) at (7.2, 7.2) {$u_6$};
\node (15) at (8, 5) {$u_7$};
\node (16) at (7.2, 2.8) {$u_8$};
\begin{scope}[>={},every node/.style={fill=white,circle,inner sep=0pt,minimum size=12pt}]
\path [] (1) edge node {1} (2);
\path [] (2) edge node {10} (3);
\path [] (3) edge node {19} (4);
\path [] (4) edge node {4} (5);
\path [] (5) edge node {13} (6);
\path [] (6) edge node {22} (7);
\path [] (7) edge node {7} (8);
\path [] (8) edge node {16} (1);
\path [] (1) edge node {8} (9);
\path [] (2) edge node {17} (10);
\path [] (3) edge node {2} (11);
\path [] (4) edge node {11} (12);
\path [] (5) edge node {20} (13);
\path [] (6) edge node {5} (14);
\path [] (7) edge node {14} (15);
\path [] (8) edge node {23} (16);
\path [] (9) edge node {9} (10);
\path [] (10) edge node {18} (11);
\path [] (11) edge node {3} (12);
\path [] (12) edge node {12} (13);
\path [] (13) edge node {21} (14);
\path [] (14) edge node {6} (15);
\path [] (15) edge node {15} (16);
\path [] (16) edge node {24} (9);
\end{scope}
\end{tikzpicture}
\end{center}
\caption{CBO of $Y_8$}
\label{fig:y8}
\end{figure}
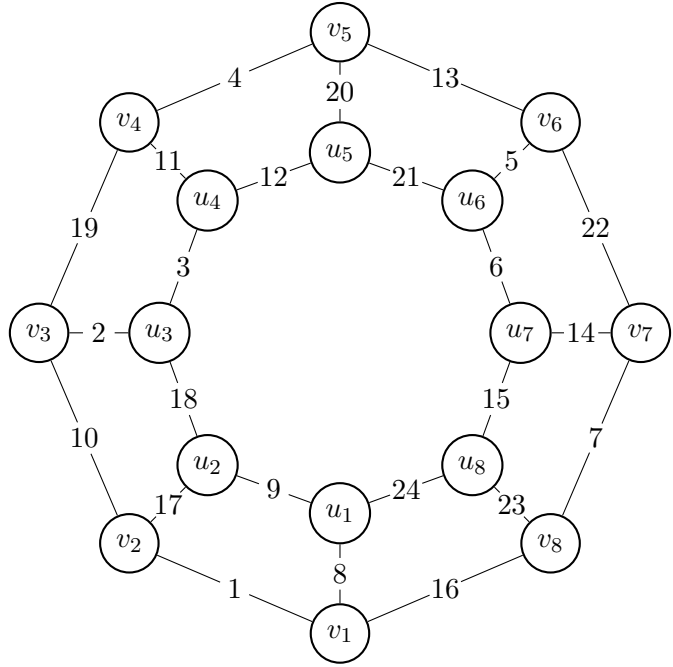

We will show that $\mathcal{O}$ is a CBO of $Y_n$. By symmetry, any progression starting with some $a_{3i+1}$ will always have the same structure. Similarly for any progression starting with some $b_{3i+3}$ or $c_{3i+3})$. Thus it suffices to verify the first three progressions. Notice that $n=3m+2$ and $Y_n$ has $2n$ vertices. Each progression has $2n-1 = 6m+3$ edges.

The first progression is $(a_1, b_{3}, c_{3}, a_{4}, b_{6}, c_{6}, a_{7}, b_{9}, c_{9},\ldots ,a_{6m+1}, b_{6m+3}, c_{6m+3})$. All the $a$-edges in this progression are $a_1, a_4,\ldots, a_{3m-2}, a_{3m+1}, a_{3m+4},\ldots, a_{6m+1}$. If the subscript $k$ is greater than $3m+2$, then the subscript is actually $k-(3m+2)$. Thus the $a$-edges in this progression actually are $a_1, a_4,\ldots, a_{3m-2}, a_{3m+1}, a_2,a_5,\ldots, a_{3m-1}$. In other words, the $a$-edges induce $m+1$ short paths $v_1v_2v_3$, $v_4v_5v_6$, $\ldots$, $v_{3m-2}v_{3m-1}v_{3m}$ and $v_{3m+1}v_{3m+2}$.

All the $c$-edges in this progression are $c_3, c_6,\ldots, c_{3m}, c_{3m+3},\ldots, c_{6m+3}$. If the subscript $k$ is greater than $3m+2$, then the subscript is actually $k-(3m+2)$. Thus the $c$-edges in this progression actually are $c_3, c_6,\ldots, c_{3m}, c_1, c_4, c_7,\ldots, c_{3m+1}$. In other words, the $c$-edges also induce $m+1$ short paths, they are $u_1u_2$, $u_3u_4u_5$, $u_6u_7u_8$, $\ldots$, $u_{3m}u_{3m+1}u_{3m+2}$.

All the $b$-edges in this progression are $b_3, b_6,\ldots, b_{3m}, b_{3m+3},\ldots, b_{6m+3}$. If the subscript $k$ is greater than $3m+2$, then the subscript is actually $k-(3m+2)$. Thus the $b$-edges in this progression actually are $b_3, b_6,\ldots, b_{3m}, b_1, b_4, b_7,\ldots, b_{3m+1}$. It is not hard to see that each $b_{3i}$ connects the path $v_{3i-2}v_{3i-1}v_{3i}$ and the path $u_{3i}u_{3i+1}u_{3i+2}$ for $i=1,2,\ldots,m$. Each $b_{3i+1}$ connects the path $v_{3i+1}v_{3i+2}v_{3i+3}$ and the path $u_{3i}u_{3i+1}u_{3i+2}$ for $i=1,2,\ldots,m-1$, and $b_1$ connects the path $v_1v_2v_3$ and the edge $u_1u_2$, while $b_{3m+1}$ connects the path $v_{3m+1}v_{3m+2}$ and the path $u_{3m}u_{3m+1}u_{3m+2}$. All vertices are connected by this progression and the progression contains exactly $2n-1$ edges, thus it induces a spanning tree.

The second progression is $(b_{3}, c_{3}, a_{4}, b_{6}, c_{6}, a_{7}, b_{9}, c_{9},\ldots ,a_{6m+1}, b_{6m+3}, c_{6m+3}, a_{6m+4})$. Notice that $a_{6m+4}$ is actually $a_{3m+2}$.
It is like to remove $a_1$ from the first progression and add $a_{3m+2}$. Removing $a_1$ will disconnect $v_1$ from other $v_k$'s but adding $a_{3m+2}$ will connect $v_1$ back to $v_{3m+2}$. Thus this progression still induces a spanning tree.

The third progression is $(c_{3}, a_{4}, b_{6}, c_{6}, a_{7}, b_{9}, c_{9},\ldots ,a_{6m+1}, b_{6m+3}, c_{6m+3}, a_{6m+4}, b_{6m+6})$. Notice that $b_{6m+6}$ is actually $b_2$. Similarly, it is like to remove $b_3$ from the second progression and add $b_{2}$. It is not hard to verify this progression still induces a spanning tree.

Therefore $\mathcal{O}$ is a CBO of $Y_n$.
\end{proof}

Here is an example for $Y_{11}$.
\begin{figure}[htb]
\begin{center}
\begin{tikzpicture}[every node/.style={circle,thick,draw}, scale=0.9] 
\node (1) at (5, 0) {$v_1$};
\node (2) at (2.2, 1.1) {$v_2$};
\node (3) at (0.2, 3.2) {$v_3$};
\node (4) at (-0.1, 6) {$v_4$};
\node (5) at (1.3, 8.3) {$v_5$};
\node (6) at (3.7, 10) {$v_6$};
\node (7) at (6.3, 10) {$v_7$};
\node (8) at (8.7, 8.3) {$v_8$};
\node (9) at (10.1, 6) {$v_9$};
\node (10) at (9.8, 3.2) {$v_{10}$};
\node (11) at (7.8, 1.1) {$v_{11}$};
\node (12) at (5, 2) {$u_1$};
\node (13) at (3.3, 2.7) {$u_2$};
\node (14) at (2, 4) {$u_3$};
\node (15) at (1.7, 5.8) {$u_4$};
\node (16) at (2.7, 7.2) {$u_5$};
\node (17) at (4.1, 8.2) {$u_6$};
\node (18) at (5.9, 8.2) {$u_7$};
\node (19) at (7.3, 7.2) {$u_8$};
\node (20) at (8.3, 5.8) {$u_9$};
\node (21) at (8, 4) {$u_{10}$};
\node (22) at (6.7, 2.7) {$u_{11}$};
\begin{scope}[>={},every node/.style={fill=white,circle,inner sep=0pt,minimum size=12pt}]
\path [] (1) edge node {1} (2);
\path [] (2) edge node {13} (3);
\path [] (3) edge node {25} (4);
\path [] (4) edge node {4} (5);
\path [] (5) edge node {16} (6);
\path [] (6) edge node {28} (7);
\path [] (7) edge node {7} (8);
\path [] (8) edge node {19} (9);
\path [] (9) edge node {31} (10);
\path [] (10) edge node {10} (11);
\path [] (11) edge node {22} (1);
\path [] (1) edge node {11} (12);
\path [] (2) edge node {23} (13);
\path [] (3) edge node {2} (14);
\path [] (4) edge node {14} (15);
\path [] (5) edge node {26} (16);
\path [] (6) edge node {5} (17);
\path [] (7) edge node {17} (18);
\path [] (8) edge node {29} (19);
\path [] (9) edge node {8} (20);
\path [] (10) edge node {20} (21);
\path [] (11) edge node {32} (22);
\path [] (12) edge node {12} (13);
\path [] (13) edge node {24} (14);
\path [] (14) edge node {3} (15);
\path [] (15) edge node {15} (16);
\path [] (16) edge node {27} (17);
\path [] (17) edge node {6} (18);
\path [] (18) edge node {18} (19);
\path [] (19) edge node {30} (20);
\path [] (20) edge node {9} (21);
\path [] (21) edge node {21} (22);
\path [] (22) edge node {33} (12);
\end{scope}
\end{tikzpicture}
\end{center}
\caption{CBO of $Y_{11}$}
\end{figure}
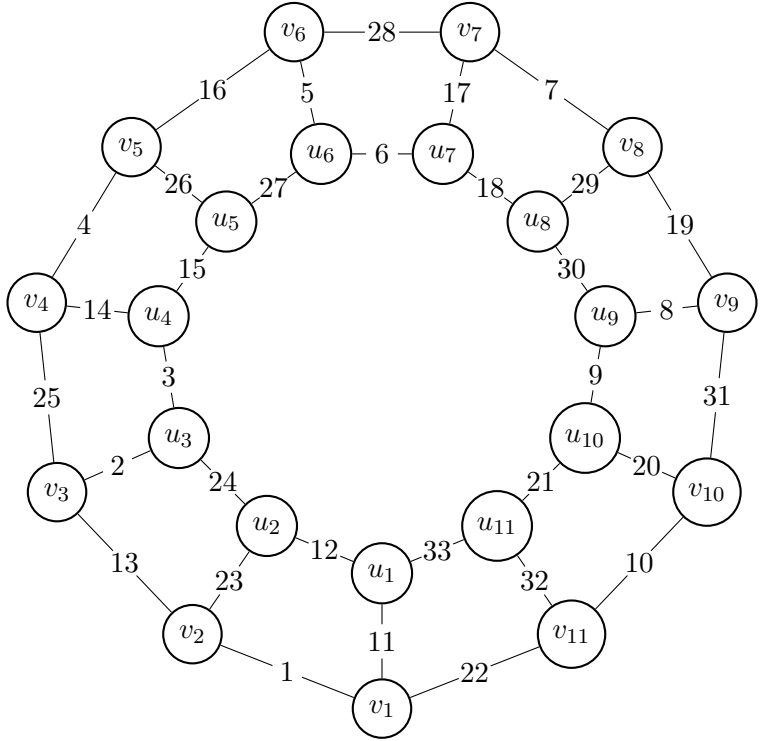

\section{Maximal $2$-degenerate graphs}
A graph is {\bf $k$-degenerate} if every subgraph has a vertex of degree at most $k$. A $k$-degenerate graph $G$ is {\bf maximal} if $G$ is no long $k$-degenerate after adding any additional edge. In particular, a $k$-tree is a maximal $k$-degenerate graph.

\begin{theorem}
Every maximal 2-degenerate graph has a cyclic base ordering.
\end{theorem}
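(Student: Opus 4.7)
The plan is to induct on $n = |V(G)|$. The base cases $n \leq 3$ are immediate since the only maximal $2$-degenerate graphs there are $K_2$ and $K_3$. For $n \geq 4$, one first checks that maximal $2$-degeneracy forces a vertex $v$ of degree exactly $2$ (otherwise one could add an edge), and that $G' = G - v$ is again maximal $2$-degenerate with $|E(G')| = 2n - 5$. The induction hypothesis then supplies a CBO $\mathcal{O}'$ of $G'$ with progressions of length $n - 2$. Writing the two neighbors of $v$ as $u_1, u_2$, the task is to splice the new edges $e_1 = vu_1$ and $e_2 = vu_2$ into $\mathcal{O}'$ so as to produce a CBO $\mathcal{O}$ of $G$.

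Because every progression of $\mathcal{O}$ must span $v$, it must contain at least one of $e_1, e_2$; a short counting argument then forces the cyclic distance between $e_1$ and $e_2$ in $\mathcal{O}$ to be exactly $n - 2$, so there is exactly one ``hard'' progression containing both. The other $2n - 4$ progressions each contain exactly one of $e_1, e_2$, and their remaining $n - 2$ edges form a cyclically contiguous block of $\mathcal{O}'$, hence a spanning tree of $G'$, to which $v$ is attached as a leaf. The hard progression's $n - 3$ old edges take the form $T_p \setminus \{f_p\}$, where $T_p$ is some progression of $\mathcal{O}'$ and $f_p$ is its last edge; for this progression together with $\{e_1, e_2\}$ to be a spanning tree of $G$, we need $T_p \setminus \{f_p\}$ to separate $u_1$ from $u_2$ in $G'$, equivalently $f_p$ must lie on the unique $u_1$-$u_2$ path $\pi_p$ in $T_p$.

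The main obstacle is therefore a lemma: in any CBO $\mathcal{O}'$ of any graph on at least $3$ vertices, and for any two vertices $u_1, u_2$, there exists a progression $T_p$ with $f_p \in \pi_p$. If $u_1 u_2 \in E(G')$, rotate $\mathcal{O}'$ so that $u_1 u_2$ is the last edge of some progression, and it lies on the path trivially. Otherwise, argue by contradiction, assuming $f_p \notin \pi_p$ for every $p$. Using the cyclic exchange $T_{p+1} = T_p - a_p + b_{p+1}$ with $a_p$ the first edge of $T_p$ and $b_{p+1} = f_{p+1}$ the last edge of $T_{p+1}$, observe that if $a_p \in \pi_p$ then $T_p - a_p$ separates $u_1$ from $u_2$, so the addition of $b_{p+1}$ must reconnect them and $b_{p+1} = f_{p+1}$ would lie on $\pi_{p+1}$, contradiction. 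Hence $a_p \notin \pi_p$ for all $p$, and then $\pi_p \subseteq T_p - a_p \subseteq T_{p+1}$ forces $\pi_{p+1} = \pi_p$ by uniqueness of tree paths. Thus every $\pi_p$ equals a common path $\pi$, every edge of $\pi$ lies in all $2n - 5$ progressions of $\mathcal{O}'$, yet each edge can appear in only $n - 2$ of them. Since $n - 2 < 2n - 5$ for $n \geq 4$, this is the desired contradiction.

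Once $p$ is obtained from the lemma, the construction of $\mathcal{O}$ is explicit: insert $e_1$ immediately before position $p$ of $\mathcal{O}'$ and $e_2$ immediately before position $p + n - 3$, and then verify the two kinds of windows as above. The hardest step is the lemma itself; the rest is bookkeeping about how the insertion interacts with the cyclic structure of $\mathcal{O}'$.
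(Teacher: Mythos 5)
Your proposal is correct, and the overall strategy coincides with the paper's: induct on $n$, delete a degree-$2$ vertex $v$ with neighbors $u_1,u_2$, take a CBO $\mathcal{O}'$ of $G-v$, splice $vu_1$ and $vu_2$ in at cyclic distance $n-2$ so that exactly one ``hard'' progression contains both new edges, and arrange that the $n-3$ old edges of that progression separate $u_1$ from $u_2$. Where you diverge is in how you certify that such a splicing point exists. You prove a standalone lemma --- in any CBO and for any pair $u_1,u_2$ there is a progression whose last edge lies on the $u_1$--$u_2$ tree path --- via a contradiction using the exchange $T_{p+1}=T_p-a_p+f_{p+1}$ and a counting bound ($n-2<2n-5$ windows). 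Your argument checks out: if no last edge ever lies on $\pi_p$, then no first edge does either, all the paths $\pi_p$ coincide, and a common edge would have to lie in every progression, which is impossible. The paper gets the same conclusion much more cheaply: it takes the $u_1$--$u_2$ path $H$ in the \emph{first} spanning tree of $\mathcal{O}'$ and lets $p$ be the edge of $H$ of smallest index; since every edge of $H$ has index at least $\mathcal{O}'(p)$, the whole of $H$ survives into the progression that starts at $p$, so $p$ is a first edge lying on the path in its own progression, and one rotates $\mathcal{O}'$ so that $\mathcal{O}'(p)=1$. (First edge versus last edge is immaterial; the two are exchanged by reversing the cyclic order.) So your route is sound but does extra work: your lemma is a genuinely more general fact about CBOs, potentially reusable (the paper implicitly relies on the same phenomenon again for $W(n,r)$), while the paper's minimum-index trick is a two-line observation tailored to the construction. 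Either way the bookkeeping for the remaining $2n-4$ progressions is the same: each is a full progression of $\mathcal{O}'$ with $v$ attached as a leaf.
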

\begin{proof}
A maximal 2-degenerate graph can be constructed from $K_2$ by adding a vertex and 2 edges at each step. %We can prove that every maximal 2-degenerate graph has a cyclic base ordering through induction. This proof is very similar to the 2-tree proof shown in Kajitani et al. \cite{KaUM88}. 
We will use induction on the number of vertices, $n$, with the base case being $n=3$, that is $K_3$. Our inductive hypothesis is that all maximal 2-degenerate graphs with less than $n$ vertices have a cyclic base ordering. Let $G$ be a maximal 2-degenerate graph on $n$ vertices, and $v$ be the newly added vertex with degree 2. Have the two vertices adjacent to $v$ be called $x$ and $y$. It is clear that the graph $G-v$ is also a maximal 2-degenerate graph, so according to our inductive hypothesis, $G-v$ has a cyclic base ordering, denoted by $\mathcal{O}'$. Let $H$ be the unique path from $x$ to $y$ in the spanning tree of $G-v$ formed by edges $(\mathcal{O}'^{-1}(1), \mathcal{O}'^{-1}(2), \ldots, \mathcal{O}'^{-1}(n-2))$. Let $p$ be the edge with the smallest $\mathcal{O}'(p)$ value in $H$. In other words, $p=\mathcal{O}'^{-1}(\min_{x \in H} \mathcal{O}'(x))$. It is easy to see that when the edge $p$ is removed, vertices $x$ and $y$ will be in different components. Without loss of generality, let $\mathcal{O}'(p)=1$. We define an edge ordering $\mathcal{O}$ of $G$ as follows:
\begin{align*}
\mathcal{O}(e) = \begin{cases}
1 & \text{ if } e = p \\
2 & \text{ if } e = vx\\
\mathcal{O}'(e)+1 & \text{ if } e \in G-v \text{ and } 2 \leq \mathcal{O}'(e) \leq n-2 \\
n & \text{ if } e = vy\\
\mathcal{O}'(e)+2 & \text{ if } e \in G-v \text{ and } n-1 \leq \mathcal{O}'(e) \leq 2n-5.
\end{cases}
\end{align*}
Clearly, every progression of $\mathcal{O}$ that contains exactly one of $vx$ or $vy$ induces a spanning tree of $G$. The only progression that doesn't contain exactly one of $vx$ or $vy$ is the progression $(vx, \mathcal{O}^{-1}(3), \mathcal{O}^{-1}(4), \ldots, \mathcal{O}^{-1}(n-1), vy)$. However, the two edges $vx$ and $vy$ take the place of edge $p$ in connecting $x$ and $y$, with the graph formed by the $n-3$ edges between $vx$ and $vy$ having vertices $x$ and $y$ in different components. Thus no cycles are formed, and the progression induces a spanning tree of $G$. Therefore, $\mathcal{O}$ is a cyclic base ordering of $G$.
\end{proof}

\section{A polynomial time algorithm}

An edge ordering of a graph can be determined to be a cyclic base ordering in $\mathcal{O}(VE)$ time, where $V$ is the number of vertices and $E$ is the number of edges. This can be done by iterating through each starting edge within the edge ordering and determining if the following $V-1$ consecutive edges form a spanning tree. This is equivalent to finding if the $V-1$ consecutive edges are comprised of a single component. The number of components can be found by creating an adjacency list graph from the $V-1$ edges and running a depth-first search on the resulting graph. If there remain any unvisited vertices, then the edge ordering is not a cyclic base ordering. Each iteration takes $\mathcal{O}(V)$ time, making the total time complexity $\mathcal{O}(VE)$.

\begin{algorithm}[H]
\SetAlgoLined
    \SetKwFunction{FisCBO}{isCBO}
    \SetKwFunction{FDFS}{DFS}
    \SetKwProg{Fn}{Function}{:}{}
    \Fn{\FDFS{$v$, $H$, $visited$}}{
        \If{$visited[v]$}{
            \KwRet\;
        }
        $visited[v]=true$\;
        \For{$v'$ in $H[v]$}{
            DFS($v'$, $H$, $visited$)\;
        }
        \KwRet\;
    }
    \SetKwProg{Fn}{Function}{:}{}
    \Fn{\FisCBO{$G$}}{
        $e \gets$ edge ordering of graph $G$\;
        \For{$i \gets 0$ \KwTo $|e|$}{
            $H \gets$ adjacency list representation of $(e_i, e_{i+1 \mod |e|}, \ldots, e_{i+V-1 \mod |e|})$\;
            \For{$j \gets 0$ \KwTo $V$}{
                $visited[j] = false$\;
            }
            DFS($0$, $H$, $visited$)\;
            \For{$j \gets 0$ \KwTo $V$}{
                \If{not $visited[j]$}{
                    \KwRet $false$\;
                }
            }
        }
        \KwRet $true$\;
    }
 \caption{Cyclic Base Ordering in $\mathcal{O}(VE)$}
\end{algorithm}

This can be further optimized to amortized $\mathcal{O}(E \log V)$ time with a link-cut tree. A link-cut tree is a data structure representing a forest of trees that supports the following operations in amortized $\mathcal{O}(\log V)$ time: adding a new vertex to the forest, adding an edge from vertex $u$ to $v$ with $u$ as the parent, disconnecting a vertex from its parent, and querying the root of the tree a vertex is in. This data structure can be used to determine if a edge ordering is a cyclic base ordering. First, add all $V$ vertices to the data structure, taking amortized $\mathcal{O}(V \log V)$ time. Then, add the first $V-1$ edges to the link-cut tree, with the lower-indexed vertex as the parent, again taking amortized $\mathcal{O}(V \log V)$ time. Next, iterate through each edge ($e_i$). For each iteration, disconnect the parent of the higher-indexed vertex of $e_i$, and query the trees of the two vertices of $e_{i+V-1}$. If they are in the same tree, then the edge ordering is not a cyclic base ordering. If they are not in the same tree, connect the two vertices of $e_{i+V-1}$ and continue iterating. Since each iteration takes amortized $\mathcal{O}(\log V)$ time, the entire algorithm takes amortized $\mathcal{O}(E \log V)$ time.
% TODO: add details of E log V impl

\end{document}